\def\blfootnote{\xdef\@thefnmark{}\@footnotetext}
\newtheorem{theorem}{Theorem}[section]
\newtheorem{lemma}[theorem]{Lemma}
\newtheorem{corollary}[theorem]{Corollary}
\theoremstyle{definition}
\newtheorem{example}[theorem]{Example}
\newtheorem{remark}[theorem]{Remark}
\newtheorem*{definition*}{Definition}
\newcommand{\N}{\mathbb N}
\newcommand{\F}{\mathbb F}
\newcommand{\Z}{\mathbb Z}
\newcommand{\f}{\varphi}
\newcommand{\g}{\gamma }
\renewcommand{\geq}{\geqslant}
\renewcommand{\leq}{\leqslant}
\newcommand{\ed} {\end{document}}
\let\leq=\leqslant
\let\geq=\geqslant
\numberwithin{equation}{section}
\begin{document}
\title{Profinite groups with an automorphism of prime order\\ whose fixed points have finite Engel sinks}

\author{E. I. Khukhro}
\address{Charlotte Scott Research Centre for Algebra, University of Lincoln, U.K.}
\email{khukhro@yahoo.co.uk}

\author{P. Shumyatsky}

\address{Department of Mathematics, University of Brasilia, DF~70910-900, Brazil}
\email{pavel@unb.br}

\keywords{Profinite groups; Engel condition; locally nilpotent; automorphism}
\subjclass[2010]{Primary 20E18,  20E36; Secondary 20F19, 20F45}

\begin{abstract}
A right Engel sink of an element $g$ of a group $G$ is a set ${\mathscr R}(g)$ such that for every $x\in G$ all sufficiently long commutators $[...[[g,x],x],\dots ,x]$ belong to ${\mathscr R}(g)$.  (Thus, $g$ is a right Engel element precisely when we can choose ${\mathscr R}(g)=\{ 1\}$.) We prove that if a profinite group $G$ admits a coprime automorphism $\f $ of prime order such that every fixed point of $\f$ has a finite right Engel sink,  then $G$ has an open locally nilpotent subgroup.

A left Engel sink of an element $g$ of a group $G$ is a set ${\mathscr E}(g)$ such that for every $x\in G$ all sufficiently long commutators $[...[[x,g],g],\dots ,g]$ belong to ${\mathscr E}(g)$.  (Thus, $g$ is a left Engel element precisely when we can choose ${\mathscr E}(g)=\{ 1\}$.) We prove that if a profinite group $G$ admits a coprime automorphism $\f $ of prime order such that every fixed point of $\f$ has a finite left Engel sink, then $G$ has an open pronilpotent-by-nilpotent subgroup. 
\end{abstract}
\maketitle

\section{Introduction}\label{s-0}

Let $G$ be a profinite group, and $\varphi$ a (continuous) automorphism of $G$ of finite order. We say for short that $\varphi$ is a \textit{coprime automorphism} of $G$ if its order is coprime to the orders of elements of $G$ (understood as Steinitz numbers), in other words, if $G$ is an inverse limit of finite groups of order coprime to the order of $\varphi$. Coprime automorphisms of profinite groups have many properties similar to the properties of coprime automorphisms of finite groups. In particular, if $\varphi$ is a coprime automorphism of $G$, then for any (closed) normal $\varphi$-invariant subgroup $N$ the fixed points of the induced automorphism (which we denote by the same letter) in $G/N$ are images of the fixed points in $G$, that is, $C_{G/N}(\varphi )=C_G(\varphi )N/N$. Therefore, if $\varphi$ is a  coprime automorphism  of prime order $p$ such that $C_G(\varphi )=1$, Thompson's theorem  \cite{tho} implies that $G$ is pronilpotent, and  Higman's theorem~\cite{hig}  implies that $G$ is nilpotent of class bounded in terms of $p$.

In our joint paper with Acciarri \cite{aks} we considered profinite groups admitting a coprime automorphism of prime order all of whose  fixed points are right Engel elements. Recall  that
the $n$-Engel word $[y,{}_{n}x]$ is defined recursively by
$[y,{}_{0}x]=y$ and $[y,{}_{i+1}x]=[[y,{}_{i}x],x]$. An element $g$ of a group $G$ is said to be right Engel if for any $x\in G$ there is an integer $n=n(g,x)$ such that $[g,{}_nx]=1$.  If all elements of a group are right Engel (therefore also left Engel), then the group is called an  Engel group. By a theorem of Wilson and Zelmanov \cite{wi-ze} based on Zelmanov's results \cite{ze92,ze95,ze17} on Engel Lie algebras, an Engel  profinite group is locally nilpotent. Recall that a group is said to be locally nilpotent if every finite subset generates a nilpotent subgroup. The following theorem was proved in \cite{aks}.

\begin{theorem}[{\cite{aks}}]\label{t-aks}
 Suppose that $\varphi$ is a coprime automorphism of prime order of a profinite group $G$. If every element of $C_G(\varphi )$ is a right Engel element of $G$, then $G$ is locally nilpotent.
\end{theorem}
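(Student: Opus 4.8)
The plan is to combine the classical theorems of Thompson and Higman on coprime automorphisms with Baer's description of the right Engel elements of a finite group, and then to pass to an associated Lie ring, where Lie-theoretic analogues of those results finish the job. First I would show that $G$ is pronilpotent. Let $N$ be an open normal $\f$-invariant subgroup of $G$; then $\f$ induces a coprime automorphism of prime order of the finite group $\bar G=G/N$, and $C_{\bar G}(\f)=C_G(\f)N/N$, so every element of $C_{\bar G}(\f)$ is the image of an element of $C_G(\f)$ and is therefore a right Engel element of $\bar G$. By Baer's theorem the right Engel elements of a finite group form its hypercentre, so $C_{\bar G}(\f)\le Z_\infty(\bar G)$, whence $C_{\bar G/Z_\infty(\bar G)}(\f)=1$. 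By Thompson's theorem $\bar G/Z_\infty(\bar G)$ is nilpotent, and since a finite group that is nilpotent over its hypercentre is itself nilpotent, $\bar G$ is nilpotent. As $N$ was arbitrary, $G$ is pronilpotent, so it is the Cartesian product $G=\prod_r G_r$ of its Sylow subgroups; each $G_r$ is $\f$-invariant and, by coprimality, $r\ne p$, where $p=|\f|$.

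Next I would reduce to a finitely generated Sylow factor. Nilpotency passes to closed subgroups, and every finitely generated closed subgroup of $G$ lies inside a topologically finitely generated $\f$-invariant one, which again satisfies the hypotheses; hence it suffices to prove that every $\f$-invariant finitely generated closed subgroup $H$ is nilpotent. A finitely generated pronilpotent group has only finitely many nontrivial Sylow subgroups, so $H$ is a finite direct product of finitely generated $\f$-invariant pro-$r_i$ groups satisfying the hypotheses, and a finite direct product of nilpotent groups is nilpotent. We are thus reduced to the following: \emph{if $P$ is a finitely generated pro-$r$ group with $r\ne p$, and $\f$ is a coprime automorphism of $P$ of prime order $p$ such that every element of $C_P(\f)$ is a right Engel element of $P$, then $P$ is nilpotent.}

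Then I would use the Lie ring $L=L_r(P)=\bigoplus_i D_i/D_{i+1}$ associated with the Zassenhaus filtration $\{D_i\}$ of $P$. It is a Lie algebra over $\F_r$ generated by its finite-dimensional component $D_1/D_2$, and $P$ is nilpotent as soon as $L$ is, by the standard relation between the two. The automorphism $\f$ induces a coprime automorphism of $L$, and by coprimality $C_L(\f)$ is spanned by homogeneous leading terms of $\f$-fixed elements of $P$. If $g\in C_P(\f)$ has leading term $\bar g\in D_d/D_{d+1}$ and $x\in D_e\setminus D_{e+1}$, then $[g,{}_nx]\in D_{d+ne}$ with image $[\bar g,{}_n\bar x]$ in $D_{d+ne}/D_{d+ne+1}$; so $[g,{}_nx]=1$ forces $[\bar g,{}_n\bar x]=0$, and therefore every homogeneous element of $C_L(\f)$ is a right Engel element of $L$. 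The main Lie-theoretic step is to deduce that the right Engel elements of $L$ lie in its hypercentre $\zeta_\infty(L)$; granting this, $C_{L/\zeta_\infty(L)}(\f)=0$, so by the Lie-ring version of Higman's theorem (and the Kreknin--Kostrikin bound) $L/\zeta_\infty(L)$ is nilpotent. Since $L$ is finitely generated, $\gamma_{c+1}(L)$ is generated as an ideal by finitely many elements of $\zeta_\infty(L)$, hence lies in some $\zeta_K(L)$, and so $L$ itself is nilpotent; thus $P$ is nilpotent.

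The main obstacle is the non-uniformity of the hypothesis: it provides, for each $g\in C_G(\f)$ and each $x$, some integer $n(g,x)$ with no common bound, so it does not translate into a single Engel identity that Zelmanov's theory of Engel Lie algebras could absorb directly. Making the reductions preserve finite generation (so that ``locally nilpotent'' becomes ``nilpotent''), transferring the hypothesis correctly between $P$ and $L_r(P)$, and — above all — showing that the right Engel elements of the graded, non-Noetherian Lie algebra $L$ form its hypercentre, is where the work lies; once that is in place, the theorems of Thompson, Higman and Kreknin--Kostrikin supply the fixed-point-free input and Baer's theorem the passage from Engel elements to the hypercentre.
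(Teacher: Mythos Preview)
This theorem is not proved in the present paper: it is quoted from \cite{aks} and used as a tool, so there is no ``paper's own proof'' here to compare against. What follows is a brief assessment of your plan relative to the argument in \cite{aks}.

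Your reduction to the pronilpotent case via Baer's theorem in finite quotients and Thompson's theorem is correct and is indeed the first step in \cite{aks}. The subsequent reduction to a finitely generated $\f$-invariant pro-$r$ group, and the passage to the associated Lie algebra over $\F_r$, are also along the right lines.

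However, the step you yourself flag as the obstacle --- that the right Engel elements of the graded Lie algebra $L$ lie in its hypercentre --- is not how the proof in \cite{aks} proceeds, and there is no general Baer-type theorem of that form for infinite-dimensional Lie algebras. There is also a subtler gap in your transfer step: from $[g,{}_{n}x]=1$ in $P$ you only obtain $[\bar g,{}_{n}\bar x]=0$ for \emph{homogeneous} $\bar x$, which does not yet make $\bar g$ a right Engel element of $L$ in the usual sense. The argument in \cite{aks} instead invokes Zelmanov's deep theorems on Lie algebras (as in the Wilson--Zelmanov framework \cite{wi-ze,ze92,ze95,ze17}): after extending scalars one uses the $\f$-eigenspace grading to produce a polynomial identity on $L$, and one shows that commutators in a suitable generating set are ad-nilpotent (here Heineken's observation that the inverse of a right Engel element is left Engel is relevant, since left Engel translates to ad-nilpotency rather than right Engel). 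Zelmanov's theorem then yields nilpotency of $L$. So your outline captures the architecture, but the decisive Lie-theoretic input is Zelmanov's machinery rather than a hypercentre argument.
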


In this paper we consider profinite groups admitting a coprime automorphism of prime order all of whose  fixed points  have finite Engel sinks. Recall that  Engel sinks are used to study generalizations of Engel conditions and are defined as follows.

\begin{definition*} \label{dl}
 A \textit{left Engel sink} of an element $g$ of a group $G$ is a set ${\mathscr E}(g)$ such that for every $x\in G$ all sufficiently long commutators $[x,g,g,\dots ,g]$ belong to ${\mathscr E}(g)$, that is, for every $x\in G$ there is a positive integer $l(x,g)$ such that
 $[x,\,{}_{l}g]\in {\mathscr E}(g)$ for all $l\geq l(x,g).
 $
 \end{definition*}
 \noindent (Thus, $g$ is a left Engel element precisely when we can choose ${\mathscr E}(g)=\{ 1\}$, and $G$ is an Engel group when we can choose ${\mathscr E}(g)=\{ 1\}$ for all $g\in G$.)

 \begin{definition*} \label{dr}
 A \textit{right Engel sink} of an element $g$ of a group $G$ is a set ${\mathscr R}(g)$ such that for every $x\in G$ all sufficiently long commutators $[g,x,x,\dots ,x]$ belong to ${\mathscr R}(g)$, that is, for every $x\in G$ there is a positive integer $r(x,g)$ such that
 $[x,\,{}_{r}g]\in {\mathscr R}(g)$ for all $r\geq r(x,g).
 $
 \end{definition*}
 \noindent (Thus, $g$ is a right Engel element precisely when we can choose ${\mathscr R}(g)=\{ 1\}$, and $G$ is an Engel group when we can choose ${\mathscr R}(g)=\{ 1\}$ for all $g\in G$.)

Our main result concerning right Engel sinks is as follows.

\begin{theorem}\label{t-r}
Let $G$ be a profinite group admitting a coprime automorphism $\f$ of prime order such that all fixed points of $\f$ have finite right Engel sinks. Then $G$ has an open locally nilpotent subgroup.
\end{theorem}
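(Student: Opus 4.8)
The plan is to reduce the statement to a bound on the index of the Fitting subgroup in the finite continuous quotients of $G$; to use such a bound to extract an open pronilpotent $\f$-invariant subgroup $W\le G$; and then to observe that in a pronilpotent group a finite right Engel sink is automatically trivial, which puts us in the situation of Theorem~\ref{t-aks}.

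The last step is the cleanest, so let me describe it first. Let $W$ be a pronilpotent profinite group. Every finite continuous quotient $W/U$ is finite nilpotent, so for $g,x\in W$ one has $[g,{}_{n}x]\in U$ as soon as $n$ exceeds the nilpotency class of $\langle gU,xU\rangle$; letting $U$ run through a base of open normal subgroups, $[g,{}_{n}x]\to 1$. Hence if $g\in W$ has a finite right Engel sink, then for each $x$ the sequence $[g,{}_{n}x]$ eventually lies in that finite, hence discrete, set while converging to $1$, so it is eventually equal to $1$: thus $g$ is a genuine right Engel element of $W$. Consequently, once an open pronilpotent $\f$-invariant subgroup $W\le G$ has been found, every element of $C_{W}(\f)\subseteq C_{G}(\f)$ has a finite right Engel sink in $W$, hence is a right Engel element of $W$, and Theorem~\ref{t-aks} applied to $W$ and $\f|_{W}$ (or, if $\f$ centralizes $W$, the Wilson--Zelmanov theorem \cite{wi-ze} applied directly) shows that $W$ is locally nilpotent, as desired.

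To produce $W$, I would pass to finite quotients. A preliminary and, I expect, delicate point is to reduce to a \emph{uniform} sink bound, i.e.\ to an integer $m$ with $|{\mathscr R}(g)|\le m$ for all $g\in C_{G}(\f)$; this requires a separate argument over the compact group $C_{G}(\f)$. Granting it, the crux becomes the finite-group assertion: there is a function $f(p,m)$ such that a finite group $G$ with a coprime automorphism $\f$ of prime order $p$ all of whose fixed points have right Engel sinks of size at most $m$ satisfies $|G:F(G)|\le f(p,m)$. Since, for coprime $\f$, $C_{G/N}(\f)$ is the image of $C_{G}(\f)$ and images of right Engel sinks are right Engel sinks, this hypothesis (with the same $m$) is inherited by every finite continuous quotient $G/N$ with $N$ a $\f$-invariant open normal subgroup, and such $N$ are cofinal; hence $|G/N:F(G/N)|\le f(p,m)$. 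As the Fitting subgroup is characteristic, the preimages in $G$ of the subgroups $F(G/N)$ form a directed family; their intersection $W$ is a closed, hence open, subgroup of index at most $f(p,m)$, it is $\f$-invariant, and it is pronilpotent because each of its finite continuous quotients embeds in a finite nilpotent group.

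The finite-group estimate is where I expect the main obstacle. The route uses Baer's theorem --- the right Engel elements of a finite group are exactly the elements of the hypercentre --- together with a bound on how far an element with a small right Engel sink can stand from the hypercentre; the hard part is to obtain a \emph{single} normal $\f$-invariant subgroup $R$ of $(p,m)$-bounded order such that \emph{every} element of $C_{G/R}(\f)$ is a right Engel element of $G/R$. This should follow from an analysis of how the right Engel sinks of a generating set propagate under commutation, combined with the standard machinery for coprime actions (the decomposition $G=[G,\f]C_{G}(\f)$, $\f$-invariant Sylow and Hall subgroups, and the control of $G$ by its subgroup $C_{G}(\f)$). Once $R$ is available, all elements of $C_{G/R}(\f)$ lie in the hypercentre of $G/R$, so by coprimeness $\f$ has no nontrivial fixed points on the quotient of $G/R$ by its hypercentre; Thompson's theorem then forces that quotient, and hence $G/R$ itself, to be nilpotent, so the nilpotent residual $\gamma_{\infty}(G)$ is contained in $R$ and has $(p,m)$-bounded order; finally $C_{G}(\gamma_{\infty}(G))$ is a central-by-nilpotent, hence nilpotent, normal subgroup of $(p,m)$-bounded index, which bounds $|G:F(G)|$ as required. (Higman's theorem would in addition bound the nilpotency class in the relevant quotients, but that refinement is not needed for the present conclusion.)
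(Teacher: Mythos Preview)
Your architecture is sound and matches the paper's endgame: reduce to an open pronilpotent $\f$-invariant subgroup, then observe that finite right Engel sinks are trivial in a pronilpotent group, and apply Theorem~\ref{t-aks}. The finite-group estimate you describe is exactly Theorem~\ref{t-bmjr} (proved in \cite{khu-shu204}), so you may simply quote it rather than sketch it; your sketch of it is in any case quite vague, and the bound there depends only on $m$, not on $p$.

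The genuine gap is the step you flag as ``preliminary and delicate'': the reduction to a uniform bound $|{\mathscr R}(g)|\le m$ for all $g\in C_G(\f)$. This is not a side issue but the heart of the proof, and Baire category alone does not deliver it. Baire category yields only an open \emph{coset} $c_0C_1\subseteq C_G(\f)$ on which $|{\mathscr R}(c_0x)|\le m$; to pass to a bound on $C_1$ itself you need a relation like ${\mathscr R}(ab)\subseteq {\mathscr R}(a){\mathscr R}(b)$, and there is no such relation in a general profinite group. The paper manufactures the setting in which this relation holds by a substantial preliminary argument: for each $g\in C_G(\f)$, Baer's theorem gives an open $N_g$ with $[N_g,g]$ pronilpotent, hence contained in $F(G)$; thus every element of $C_{G/F(G)}(\f)$ has a finite conjugacy class, and the strengthened Neumann $BFC$-theorem of Acciarri--Shumyatsky \cite{acc-shu} forces an open subgroup in which $G/F(G)$ is soluble of bounded derived length. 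Inducting on that derived length reduces to $G/F(G)$ abelian; passing to $F/\Phi(F)$ makes $G$ metabelian. A further lemma (using left Engel sinks via Lemma~\ref{l-metab} and Lemma~\ref{l-local}) shows $C_G(\f)F/F$ is finite, so after shrinking one may assume $C_G(\f)\le F$ with $F$ abelian. Only then does the commutator identity $[ab,{}_n x]=[a,{}_n x][b,{}_n x]$ give ${\mathscr R}(ab)\subseteq {\mathscr R}(a){\mathscr R}(b)$, allowing the Baire-category coset bound to be upgraded to a subgroup bound $|{\mathscr R}(x)|\le m^2$ on an open $C_1\le C_G(\f)$, after which Theorem~\ref{t-bmjr} finishes exactly as you describe.

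In short: your outline is correct from the point where a uniform bound is available, but you have not supplied the mechanism that produces it, and that mechanism---the $BFC$ argument, the metabelian reduction, and the passage to $C_G(\f)\le F$---constitutes most of the proof.
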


Note that if all elements of a profinite or compact group have finite or even countable left or right Engel sinks, then the group has a finite subgroup with locally nilpotent quotient \cite{khu-shu162,khu-shu191,khu-shu172,khu-shu201}. 
Examples show that such a stronger conclusion does not hold under the hypotheses  of Theorem~\ref{t-r}, which is in a sense best-possible.

One of the important tools in the proof of Theorem~\ref{t-r}  is a strengthened version of Neumann's theorem about $BFC$-groups from the recent paper of Acciarri and Shumyatsky~\cite{acc-shu}. The proof also  makes use of the quantitative version for finite groups that we proved earlier  in~\cite{khu-shu204}. In that paper \cite{khu-shu204} we also proved that if a finite group $G$ has a coprime automorphism $\f$ of prime order such that all fixed points of $\f$ have left Engel sinks of cardinality at most $m$, then $G$ has a metanilpotent subgroup of index bounded in terms of $m$ (examples show that here ``metanilpotent'' cannot be replaced by ``nilpotent''). We prove the following profinite analogue of this result.

\begin{theorem}\label{t-l}
Let $G$ be a profinite group admitting a coprime automorphism $\f$ of prime order $p$. If all fixed points of $\f$ have finite left Engel sinks, then $G$ has an open subgroup that is an extension of a pronilpotent group by a nilpotent group of class $h(p)$, where $h(p)$ is Higman's function depending only on~$p$.
\end{theorem}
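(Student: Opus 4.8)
The plan is to transfer to the profinite setting the quantitative theorem for finite groups of \cite{khu-shu204}. Write $G$ as the inverse limit of its finite quotients $G/N$, where $N$ ranges over the open normal $\f$-invariant subgroups. For each such $N$ the induced automorphism is coprime of order dividing $p$, by coprimality $C_{G/N}(\f)=C_G(\f)N/N$, and the image in $G/N$ of a finite left Engel sink of an element $g\in C_G(\f)$ is a left Engel sink of $gN$; hence every fixed point of $\f$ in $G/N$ has a finite left Engel sink, and \cite{khu-shu204} (in the refined form controlling the class of the top Fitting factor) produces in $G/N$ a subgroup of index bounded in terms of those sink sizes that is an extension of a nilpotent group by a nilpotent group of class $\le h(p)$. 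The obstruction to passing to the limit is that these sink sizes, and hence the indices, need not be bounded independently of $N$.

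Thus the crux will be a uniform bound: there is a positive integer $m$ such that every fixed point of $\f$ has a left Engel sink of cardinality at most $m$. To prove this I would start from the known structure of profinite groups all of whose elements have finite left Engel sinks (the results quoted after Theorem~\ref{t-r}), applied to the compact group $C_G(\f)$ — each element of which has a finite left Engel sink in $G$, hence in $C_G(\f)$ — to obtain a finite normal subgroup $C_0\le C_G(\f)$ with $C_G(\f)/C_0$ pronilpotent. The real work is to promote this to a bound on the sinks computed in $G$ itself; I would attempt this by compactness, covering $C_G(\f)$ by the sets of its elements whose left Engel sink in $G$ has size at most $m$ ($m=1,2,\dots$), checking that these sets are closed, and invoking the Baire property together with $\f$-invariance and the structure above to extract a single working value of $m$. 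This passage — controlling minimal left Engel sinks under limiting processes, and under the change of ambient group from $C_G(\f)$ to $G$ — is the delicate point, and the step I expect to be the main obstacle.

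Granting the uniform bound $m$, let $f(m)$ be the corresponding index bound from \cite{khu-shu204}. For each $N$ the finite group $G/N$ has only finitely many subgroups of index at most $f(m)$, among which the ones that are nilpotent-by-(nilpotent of class $\le h(p)$) form a nonempty set; these sets form an inverse system under the projections $G/N_1\to G/N_2$, so a compatible family $(H_N)$ exists. Its inverse limit $H=\bigcap_N\pi_N^{-1}(H_N)$, with $\pi_N\colon G\to G/N$, is a closed subgroup of index at most $f(m)$, hence open; writing $F_N\trianglelefteq H_N$ for the distinguished nilpotent normal subgroup with $H_N/F_N$ nilpotent of class $\le h(p)$, the subgroup $K=\bigcap_N\pi_N^{-1}(F_N)$ is closed and normal in $H$, is pronilpotent as an inverse limit of finite nilpotent groups, and $H/K$ embeds into the inverse limit of the groups $H_N/F_N$ and so is nilpotent of class $\le h(p)$. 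This $H$ is the required open subgroup. (The degenerate case $\f=\mathrm{id}$ is not covered by \cite{khu-shu204}, but there the hypothesis says every element of $G$ has a finite left Engel sink, and the cited structure theory already gives that $G$ is finite-by-pronilpotent, hence has an open pronilpotent subgroup.)

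Finally, it is worth recording that the case $m=1$ — all fixed points of $\f$ are genuine left Engel elements of $G$ — is the model for the whole argument and is considerably simpler: by Baer's theorem $C_{G/N}(\f)\le F(G/N)$ for every $N$, so $\f$ acts fixed-point-freely on $(G/N)/F(G/N)$, which by the theorems of Thompson and Higman is therefore nilpotent of class $\le h(p)$; taking $K$ to be the inverse limit of the Fitting subgroups $F(G/N)$ then shows that $G$ itself, with no passage to a proper open subgroup, is an extension of a pronilpotent group by a nilpotent group of class $\le h(p)$. The general argument imitates this, with the uniform bound $m$ playing the role of the index.
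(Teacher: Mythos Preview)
Your proposal has a genuine gap at exactly the point you flag: the uniform bound $m$ on $|\mathscr{E}(g)|$ over all $g\in C_G(\f)$. The Baire argument you sketch only yields a coset $c_0C_1$ of an open subgroup $C_1\leq C_G(\f)$ on which the sinks are bounded; to pass from the coset to $C_1$ itself one needs to relate $\mathscr{E}(c_0x)$ to $\mathscr{E}(x)$, and there is no general mechanism for this. In the proof of Theorem~\ref{t-r} such a transfer is carried out (see the proof of Lemma~\ref{l-cfin} and the subsequent argument), but only after a hard reduction to a metabelian situation with $C_G(\f)\leq F(G)$ abelian, where explicit commutator identities and the Hirsch--Plotkin theorem do the work. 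No analogous reduction is available a priori here, and neither the finite-by-(locally nilpotent) structure of $C_G(\f)$ nor $\f$-invariance supplies it: the sinks are computed in $G$, not in $C_G(\f)$, so knowing that $C_G(\f)/C_0$ is pronilpotent says nothing directly about $\mathscr{E}(x)$ in $G$. Without the uniform bound, the indices $f(m)$ coming from \cite{khu-shu204} are not controlled across the inverse system, and the limit argument collapses.

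The paper avoids this obstacle entirely by never seeking a uniform sink bound. Instead it proves a structural lemma (Lemma~\ref{l-f2fin}): if $G=F_2(G)$ satisfies the hypothesis, then $C_{G/F(G)}(\f)$ is finite. The proof shows that every element of $FC_G(\f)$ has a finite left Engel sink (using that $F$ may be taken abelian and a short commutator computation), applies \cite[Theorem~1.2]{khu-shu162} to get $\gamma_\infty(FC_G(\f))$ finite, and then uses Lemma~\ref{l-gam} and Lemma~\ref{l-f2}(a) to conclude $[FC_G(\f):F]<\infty$. For the theorem itself, one first replaces $G$ by an open subgroup of pronilpotent length at most $3$ (your observation that $C_G(\f)/C_0$ is locally nilpotent, combined with the Wang--Chen solubility theorem and Turull's Fitting-height bound), then applies Lemma~\ref{l-f2fin} twice---to $G/F(G)$ and to $F_2(G)$---to obtain $C_{G/F(G)}(\f)$ finite. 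Passing to an open $\f$-invariant $H\geq F(G)$ with $C_H(\f)\leq F(G)=F(H)$, the automorphism $\f$ is fixed-point-free on $H/F(H)$, and Thompson--Higman give the class bound $h(p)$ directly. Your model case $m=1$ is exactly Remark~\ref{r-l1}, and the paper's general argument is closer in spirit to that case than to an inverse-limit transfer of the quantitative finite theorem.
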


 There are examples showing that in the conclusion of Theorem~\ref{t-l} ``pronilpotent-by-nilpotent'' cannot be replaced even by  ``pronilpotent'',   in contrast to the stronger virtual local nilpotency conclusion of Theorem~\ref{t-r} about right Engel sinks. Similarly, if all fixed points of $\f$ are left Engel elements, then the group $G$ is an extension of a pronilpotent group by a nilpotent group of class $h(p)$, where $h(p)$ is Higman's function (Remark~\ref{r-l1}), but $G$ does not have to have an open locally nilpotent subgroup, unlike for the right Engel condition in Theorem~\ref{t-aks}.  Thus, the situation with Engel sinks for fixed points of an automorphism is markedly different from the aforementioned results with conditions on Engel sinks of all elements of a profinite or compact group, where the finiteness (or countability) of right or left Engel sinks resulted in the same conclusion that the group is finite-by-(locally nilpotent).
 
It is worth mentioning that if, under the hypotheses of Theorems~\ref{t-r} (or \ref{t-l}), there is $m\in \N$ such that all right (respectively, left) Engel sinks of fixed points of $\f$ have cardinality at most $m$, then the conclusions can be strengthened, with bounds for the index of a locally nilpotent (respectively, pronilpotent-by-nilpotent) subgroup (Remarks~\ref{r-r} and ~\ref{r-l2}).
 
  We present preliminary material on profinite groups and left and right Engel sinks in \S\,\ref{s-0}. Theorems~\ref{t-r} and \ref{t-l} about right and left Engel sinks are proved in \S\,\ref{s-r} and \S\,\ref{s-l}, respectively. In \S\,\ref{s-e} we present examples showing that in some respects Theorems \ref{t-r} and \ref{t-l} cannot be improved.

\section{Preliminaries}\label{s-p}
In this section we recall some definitions and general properties related to profinite groups and Engel sinks.

Our notation and terminology for profinite groups is standard; see, for example,  \cite{rib-zal} and  \cite{wil}.  A subgroup (topologically) generated by a subset $S$ is denoted by $\langle S\rangle$. By a subgroup we always mean a closed subgroup, unless explicitly stated otherwise. Recall that centralizers are closed subgroups, while commutator subgroups $[B,A]=\langle [b,a]\mid b\in B,\;a\in A\rangle$ are the closures of the corresponding abstract commutator subgroups.

For a group $A$ acting by automorphisms on a group $B$ we use the usual notation for commutators $[b,a]=b^{-1}b^a$ and commutator subgroups $[B,A]=\langle [b,a]\mid b\in B,\;a\in A\rangle$, as well as for centralizers $C_B(A)=\{b\in B\mid b^a=b \text{ for all }a\in A\}$
and $C_A(B)=\{a\in A\mid b^a=b\text{ for all }b\in B\}$. A section $A/B$ of a group $G$ is a quotient of a subgroup $A\leq G$ by a normal subgroup $B$ of $A$. The centralizer of a section is $C_G(A/B)=\{g\in G\mid [A,g]\leq B\}$. The definition and some properties of coprime automorphisms of profinite groups were already mentioned at the beginning of the Introduction in \S\,\ref{s-0}.

Recall that a pro-$p$ group  is an inverse limit of finite $p$-groups, a pronilpotent group is an inverse limit of finite nilpotent groups, a prosoluble group is an inverse limit of finite soluble  groups. We denote by $\pi (k)$ the set of prime divisors of $k$, where $k$ may be a positive integer or a Steinitz number, and by $\pi (G)$ the set of prime divisors of the orders of elements of a (profinite) group $G$. Let $\sigma$ be a set of primes. An element $g$ of a group is  a $\sigma$-element if $\pi(|g|)\subseteq \sigma$, and a group $G$ is a $\sigma$-group if all of its elements are $\sigma$-elements. We denote by $\sigma'$ the complement of $\sigma$ in the set of all primes. When $\sigma=\{p\}$,  we write $p$-element, $p'$-element, etc.
Profinite groups have Sylow $p$-subgroups and satisfy analogues of the Sylow theorems.  Prosoluble groups satisfy analogues of the theorems  on Hall $\pi$-subgroups.  We refer the reader to the corresponding chapters in \cite[Ch.~2]{rib-zal} and \cite[Ch.~2]{wil}.

We denote by  $\gamma _{\infty}(G)=\bigcap _i\gamma _i(G)$ the intersection of the lower central series of a group~$G$. A profinite group $G$ is pronilpotent if and only if $\gamma _{\infty}(G)=1$, which is also equivalent to $G$ being the Cartesian product of its Sylow subgroups. Every profinite group $G$ has a maximal normal pronilpotent subgroup denoted by $F(G)$. This subgroup has the following characterization, similar to that of the Fitting subgroup of a finite group.

  \begin{lemma}\label{l-f}
  The maximal normal pronilpotent subgroup $F(G)$ of a profinite group $G$ is equal to the intersection of the centralizers of all chief factors of all finite quotients of $G$ by open normal subgroups.
    \end{lemma}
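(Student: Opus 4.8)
The plan is to establish the two inclusions separately, reducing everything to the well-known Fitting-subgroup characterization in finite groups via the inverse limit structure. Write $D$ for the intersection of the centralizers $C_G(A/B)$, where $A/B$ ranges over all chief factors of all finite quotients $G/U$ with $U$ open normal in $G$; more precisely, if $U\trianglelefteq G$ is open and $B/U\trianglelefteq A/U$ is a chief factor of $G/U$, then $C_G(A/B)=\{g\in G\mid [A,g]\leq B\}$ is a closed subgroup of $G$, and $D$ is the (closed) intersection of all of these. I would first check that $D$ is a well-defined closed normal subgroup.

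For the inclusion $F(G)\leq D$: fix an open normal $U\trianglelefteq G$ and a chief factor $B/U$ of $A/U$ in $G/U$. The image $F(G)U/U$ is a normal nilpotent (hence, in the finite group $G/U$, subnormal nilpotent) subgroup of $G/U$, so it lies in the Fitting subgroup $F(G/U)$, which by the classical result centralizes every chief factor of $G/U$. Hence $[A,F(G)]\leq B$, i.e. $F(G)\leq C_G(A/B)$. Intersecting over all such data gives $F(G)\leq D$.

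For the reverse inclusion $D\leq F(G)$: it suffices to show that $D$ is pronilpotent, since $D$ is normal and $F(G)$ is the maximal normal pronilpotent subgroup. Equivalently, I would show that the image $DU/U$ is nilpotent in $G/U$ for every open normal $U\trianglelefteq G$. Fix such a $U$ and refine $U$ to a chief series $U=U_0<U_1<\dots <U_k=G$ of $G$ as a finite group $G/U$ (a chief series of $G/U$ lifted to $G$). By definition of $D$, every element of $D$ centralizes each factor $U_{i+1}/U_i$; hence $DU/U$ stabilizes a normal series of $G/U$ and is therefore nilpotent by the classical stability-group theorem (Kaluzhnin--Hall). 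Thus $DU/U\leq F(G/U)$ for all $U$, so $D$ is pronilpotent, giving $D\leq F(G)$. Combining the two inclusions yields $F(G)=D$.

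The main point requiring care — the only genuine obstacle — is the passage between \emph{chief factors of arbitrary finite quotients} and \emph{a single chief series refining one fixed open normal subgroup}: one must observe that to prove $DU/U$ nilpotent it is enough to work with one chief series through $U$, and that every factor of that series does occur as a chief factor of the finite quotient $G/U$, so the defining intersection for $D$ really does force centralization of all of them. Everything else is a routine transfer of the finite-group facts ($F(\overline G)$ centralizes chief factors; stabilizers of normal series are nilpotent) through the inverse limit $G=\varprojlim G/U$.
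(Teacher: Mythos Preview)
Your proof is correct and follows essentially the same route as the paper's: both directions reduce to the classical finite-group fact that the Fitting subgroup is precisely the intersection of centralizers of chief factors, pushed through the finite quotients $G/U$. The paper compresses your $D\leq F(G)$ argument into a single appeal to this characterization (citing \cite[5.2.9]{rob}), whereas you unpack it via a chief series through $U$ and the Kalu\v{z}nin--Hall stability theorem; the content is the same.
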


  \begin{proof}
    The intersection in question is clearly a closed normal subgroup. In any finite quotient of $G$, the image of this intersection is nilpotent by the well-known characterization of the Fitting subgroup of a finite group \cite[5.2.9]{rob}. Hence this intersection is contained in $F(G)$. Conversely, any element of $F(G)$ clearly belongs to the Fitting subgroup of any finite quotient of $G$ and therefore centralizes every chief factor of it.
  \end{proof}

We can define a profinite analogue of the Fitting series by setting $F_1(G)=F(G)$, and then by induction $F_{k+1}(G)$ being the inverse image of $F(G/F_k(G))$. It is natural to say that a profinite group has pronilpotent length $l$ if $F_l(G)=G$ and $l$ is minimal with this property.
We record a useful elementary lemma about the pronilpotent series.

  \begin{lemma}\label{l-f2}
  {\rm (a)}
  If $H$ is a  subgroup of a profinite group $G$ such that $F(G)\leq H\leq F_2(G)$, then $F(H)=F(G)$.

  {\rm (b)} If $g$ is a $p$-element  in $F_2(G)\setminus F(G)$, then $g$ induces by conjugation a non-trivial automorphism of the Hall $p'$-subgroup of $F(G)$.
  \end{lemma}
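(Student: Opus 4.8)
The plan is to prove (a) first and deduce (b) from it. For (a), the inclusion $F(G)\le F(H)$ is clear, since $F(G)$ is a closed normal pronilpotent subgroup of $H$; the content is the reverse inclusion, which I would prove prime by prime. Write $F(H)=\prod_q Q_q$ and $F(G)=\prod_q P_q$ for the decompositions into Cartesian products of Sylow subgroups; each $Q_q$ is characteristic in $F(H)$ and hence normal in $H$, each $P_q$ equals the largest normal pro-$q$ subgroup $O_q(G)$, and $P_q\le Q_q$, so it suffices to show $Q_q\le O_q(G)$ for every $q$. First, $Q_q$ centralises the Hall $q'$-subgroup $P_{q'}=\prod_{r\ne q}P_r$ of $F(G)$: since $Q_q\trianglelefteq H$ and $P_{q'}\trianglelefteq G$ both lie in $H$, one gets $[Q_q,P_{q'}]\le Q_q\cap P_{q'}=1$ (one factor is pro-$q$, the other pro-$q'$). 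Next I would place $Q_q$ inside a subgroup normal in all of $G$: in $\bar G=G/F(G)$ the group $F(\bar G)=F_2(G)/F(G)$ is pronilpotent and contains the image of $H$, so the pro-$q$ subgroup $\overline{Q_q}$ lies in the Sylow $q$-subgroup of $F(\bar G)$, which is characteristic in $F(\bar G)$ and hence normal in $\bar G$; pulling back yields a normal subgroup $M\trianglelefteq G$ with $F(G)\le M$, $M/F(G)$ pro-$q$, and $Q_q\le M$. Since $F(G)/P_{q'}\cong P_q$ is pro-$q$, also $M/P_{q'}$ is pro-$q$, and as $P_{q'}$ is a normal pro-$q'$ subgroup of $M$ the Schur--Zassenhaus theorem for profinite groups gives $M=P_{q'}\rtimes S$ with $S$ a Sylow $q$-subgroup, which we may take to contain $Q_q$. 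Then $Q_q\le C_S(P_{q'})$, and $C_S(P_{q'})$ is normal in $M$ (it is normalised by $S$, and by $P_{q'}$ because its elements commute with $P_{q'}$), so $Q_q\le C_S(P_{q'})\le O_q(M)$. Finally $O_q(M)$ is characteristic in $M\trianglelefteq G$ and pro-$q$, hence $O_q(M)\le O_q(G)=P_q$, and therefore $Q_q=P_q$ for every $q$, i.e.\ $F(H)=F(G)$.

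For (b), suppose to the contrary that the $p$-element $g\in F_2(G)\setminus F(G)$ centralises the Hall $p'$-subgroup $F(G)_{p'}$ of $F(G)$, and put $H=\langle g\rangle F(G)$, a closed subgroup with $F(G)\le H\le F_2(G)$. Writing $F(G)=F(G)_p\times F(G)_{p'}$, the subgroup $\langle g\rangle F(G)_p$ is pro-$p$ (generated by the $p$-element $g$ and the normal pro-$p$ subgroup $F(G)_p$), it centralises $F(G)_{p'}$, and it meets $F(G)_{p'}$ trivially, so $H=\bigl(\langle g\rangle F(G)_p\bigr)\times F(G)_{p'}$ is pronilpotent. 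By part~(a), $F(H)=F(G)$; but a pronilpotent group is its own maximal normal pronilpotent subgroup, so $H=F(H)=F(G)$, forcing $g\in F(G)$ -- a contradiction. Hence $g$ does not centralise $F(G)_{p'}$, which is exactly the assertion that conjugation by $g$ induces a non-trivial automorphism of it.

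The only real obstacle is that $F(H)$ and its Sylow subgroups $Q_q$ are \emph{a priori} only normal in $H$, not in $G$, so the maximality of $F(G)$ cannot be invoked for them directly. The device that resolves this is the passage to $\bar G=G/F(G)$, where $F_2(G)/F(G)=F(\bar G)$ is pronilpotent and therefore forces $\overline{Q_q}$ into a normal pro-$q$ subgroup, followed by the coprime Schur--Zassenhaus analysis inside the genuinely $G$-normal pull-back $M$, at the end of which the ``characteristic in normal'' principle returns us to $O_q(G)$. Everything else uses only standard facts about profinite groups: Sylow theory, the Schur--Zassenhaus theorem, and the description of a pronilpotent group as the Cartesian product of its Sylow subgroups.
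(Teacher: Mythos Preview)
Your proof is correct, but it takes a genuinely different route from the paper's.

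For part~(a), the paper invokes the earlier characterization (Lemma~\ref{l-f}) of $F(G)$ as the intersection of centralizers of chief factors of finite quotients. Given a chief factor $A/B$ of a finite quotient of $G$, one observes that it lies in (the image of) $G/F_2(G)$, or $F_2(G)/F(G)$, or $F(G)$, and in each case an element of $F(H)$ centralizes it: in the first case because $H\le F_2(G)$ acts trivially above $F_2(G)$; in the second because $F_2(G)/F(G)$ is pronilpotent so $F_2(G)$ centralizes all its chief factors; in the third because $F(G)\le H$ and the nilpotent group $\overline{F(H)}$ acts as a $p$-group on each $p$-chief factor inside $\overline{F(G)}$ that it already contains. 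This yields $F(H)\le F(G)$ in a few lines. Your argument instead works prime by prime via Sylow theory: you show $Q_q$ centralizes $P_{q'}$, push $Q_q$ into a $G$-normal subgroup $M$ using the Sylow structure of $F(\bar G)$, and then use Schur--Zassenhaus in $M$ together with the ``characteristic in normal'' trick to land $Q_q$ inside $O_q(G)$. This is longer and brings in Schur--Zassenhaus, but it is entirely self-contained and does not depend on Lemma~\ref{l-f}; it also makes very explicit where the hypothesis $H\le F_2(G)$ is used (namely, to force $\overline{Q_q}$ into the unique Sylow $q$-subgroup of $F(\bar G)$).

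For part~(b), the paper again argues with chief factors directly: since $g\notin F(G)$, by Lemma~\ref{l-f} it moves some chief factor, which must lie in $F(G)$ (because $g\in F_2(G)$) and in fact in the $p'$-part (because $g$ lies in a Sylow $p$-subgroup containing that of $F(G)$). Your reduction to (a) via $H=\langle g\rangle F(G)$ is a nice alternative: it packages the same idea structurally rather than factor by factor, and the verification that $H$ is pronilpotent when $g$ centralizes $F(G)_{p'}$ is clean.
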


\begin{proof}
(a)  Clearly, $F(G)\leq F(H)$. We now prove the reverse inclusion. Any chief factor $A/B$ of a finite quotient of $G$ is a section of $G/F_2(G)$, or of $F_2(G)/F(G)$, or of $F(G)$. An element of $F(H)$ centralizes $A/B$ in the first case because $H\leq F_2(G)$, in the second case because $F_2/F(G)$ is pronilpotent, and in the third case because $F(G)\leq H$. Hence $F(H)\leq F(G)$ by Lemma~\ref{l-f}.

(b) By Lemma~\ref{l-f} the element $g$ must act nontrivially on some chief factor of a finite quotient of $G$ by an  open normal subgroups. Since $g\in F_2(G)$, such a chief factor must be a section of $F(G)$, and since $ g$ is contained in a Sylow $p$-subgroup containing the  Sylow $p$-subgroup of $F(G)$, such a chief factor must be a section of the Hall $p'$-subgroup of $F(G)$.
  \end{proof}

  If  $P$ is a pro-$p$ group, the  Frattini subgroup of $P$ is $\Phi (P)=[P,P]P^p$. If $\alpha$ is a coprime automorphism of $P$, then $\alpha $ acts nontrivially on $P/\Phi(P)$. The Frattini subgroup of a pronilpotent group is the Cartesian product of the Frattini subgroups of its Sylow $p$-subgroups. It follows from Lemmas~\ref{l-f} and \ref{l-f2} that
\begin{equation}\label{e-phi}
  F(G/\Phi (F(G)))=F(G)/\Phi (F(G)).
\end{equation}

\begin{lemma}\label{l-gam} Let $G$ be a profinite group such that $\g_{\infty}(G)$ is finite. Then $C_G(\g_{\infty}(G))$ is an open pronilpotent subgroup.
\end{lemma}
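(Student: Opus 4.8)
The plan is to put $N=\g_{\infty}(G)$ and $H=C_G(N)$, and to prove the two assertions separately: that $H$ is open, and that $H$ is pronilpotent. Openness is the easy part. Since $N$ is finite, the conjugation action of $G$ on $N$ yields a continuous homomorphism $G\to \operatorname{Aut}(N)$ whose kernel is exactly $H=C_G(N)$ and whose image is finite; hence $H$ is a closed subgroup of finite index in $G$, and therefore open.

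For pronilpotency I would argue as follows. Because $G/N$ is pronilpotent (as $\g_{\infty}(G)=N$) and $N\trianglelefteq G$, the subgroup $HN/N\cong H/(H\cap N)$ is a closed subgroup of the pronilpotent group $G/N$, hence itself pronilpotent; since $\g_{\infty}$ of a quotient is the image of $\g_{\infty}$, this gives $\g_{\infty}(H)\le H\cap N\le N$. Now $H=C_G(N)$ centralizes $N$, so $[\g_{\infty}(H),H]\le [N,H]=1$, i.e. $\g_{\infty}(H)\le Z(H)$. It then remains to observe that a profinite group whose nilpotent residual is central must be pronilpotent: writing $K=\g_{\infty}(H)$, the subgroup $[K,H]$ is closed and normal in $H$, the image of $K$ in $H/[K,H]$ is central, and $(H/[K,H])/(K/[K,H])\cong H/K$ is pronilpotent, so every finite continuous quotient of $H/[K,H]$ is central-by-nilpotent, hence nilpotent; thus $H/[K,H]$ is pronilpotent and $K=\g_{\infty}(H)\le [K,H]$. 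Since $K\le Z(H)$ forces $[K,H]=1$, we conclude $\g_{\infty}(H)=1$, that is, $H$ is pronilpotent.

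There is no genuinely hard step here; the argument is essentially a reduction to the familiar fact that central-by-nilpotent finite groups are nilpotent. The only points needing a little care are the two routine topological facts used along the way — that a closed subgroup of a pronilpotent profinite group is pronilpotent, and that $\g_{\infty}$ passes to images in quotients by closed normal subgroups (both following by inspection of finite continuous quotients and a compactness argument) — so if anything is an ``obstacle'' it is merely making sure this topological bookkeeping is stated cleanly rather than any real difficulty.
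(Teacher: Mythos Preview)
Your proof is correct and differs from the paper's only in the pronilpotency half. The paper appeals to Lemma~\ref{l-f}, the characterization of $F(G)$ as the intersection of centralizers of chief factors of finite quotients: every chief factor of a finite quotient of $G$ is either a section of the pronilpotent group $G/\g_{\infty}(G)$ (hence central, so centralized by everything) or a section of $\g_{\infty}(G)$ (hence centralized by $C_G(\g_{\infty}(G))$), and therefore $C_G(\g_{\infty}(G))\le F(G)$. You instead bypass chief factors and show directly that $\g_{\infty}(H)=1$ by trapping $\g_{\infty}(H)$ inside $Z(H)$ and then using the elementary fact that finite central-by-nilpotent groups are nilpotent. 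Both arguments are short; the paper's fits the machinery already set up, while yours is self-contained. One small remark: the compactness argument you flag is not actually needed, since you only use the easy inclusion $\g_{\infty}(H)M/M\le \g_{\infty}(H/M)$, which follows from $\g_i(H)M/M=\g_i(H/M)$ without any limit argument.
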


\begin{proof}
The subgroup $C_G(\g_{\infty}(G))$ is closed and has finite index, since $G/C_G(\g_{\infty}(G))$ faithfully acts by automorphisms on $\g_{\infty}(G)$; hence $C_G(\g_{\infty}(G))$ is an open normal subgroup. Any  chief factor $A/B$ of a finite quotient of $G$ by an open normal subgroup is either a section of $G/\g_{\infty}(G)$, which is pronilpotent, or of $\g_{\infty}(G)$. Hence any  element of $C_G(\g_{\infty}(G))$ centralizes $A/B$ and the result follows by Lemma~\ref{l-f}.
\end{proof}

We recall the well-known consequence of the Baire Category Theorem (see \cite[Theorem~34]{kel}).

\begin{theorem}\label{bct}
If a profinite group is a countable union of closed subsets, then one of these subsets has non-empty interior.
\end{theorem}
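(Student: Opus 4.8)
The plan is to obtain this as an instance of the Baire Category Theorem. A profinite group $G$ is, by definition, an inverse limit of finite discrete groups, so it is a closed subspace of a product of finite discrete spaces; in particular $G$ is a compact Hausdorff space, and it is zero-dimensional, the cosets of the open (normal) subgroups forming a base of clopen sets. First I would reduce the statement to the assertion that $G$ is a Baire space, i.e. that a countable union of nowhere dense subsets of $G$ has empty interior. Indeed, if $G=\bigcup_{n}C_n$ with each $C_n$ closed and having empty interior, then each $C_n$ is nowhere dense, so $\bigcup_n C_n$ would have empty interior; but this union is the nonempty open set $G$, a contradiction. Hence some $C_n$ must have nonempty interior.

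To verify that $G$ is a Baire space I would run the usual nested-sets argument, using compactness in place of metric completeness and zero-dimensionality in place of normality. Let $C_1,C_2,\dots$ be closed with empty interior, and let $V$ be any nonempty open subset of $G$; I claim $V\not\subseteq\bigcup_nC_n$. Since $C_1$ has empty interior, $V\setminus C_1$ is nonempty and open, hence contains a nonempty clopen set $W_1$ (a coset of an open subgroup). Inductively, $W_{n-1}\setminus C_n$ is nonempty and open, so it contains a nonempty clopen set $W_n\subseteq W_{n-1}$. The $W_n$ then form a decreasing chain of nonempty closed subsets of the compact space $G$, so by the finite intersection property $\bigcap_nW_n\neq\emptyset$; any point of this intersection lies in $V$ and in no $C_n$. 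This proves the claim, and taking $V=G$ yields the theorem.

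The only step requiring a word of care — and it is completely routine — is the inductive passage: one needs that $W_{n-1}\setminus C_n$ is nonempty, which is precisely what ``$C_n$ has empty interior'' provides (otherwise $W_{n-1}\subseteq C_n$), and that every nonempty open subset of $G$ contains a nonempty coset of an open subgroup, which is the defining feature of the profinite topology. Since nothing here is genuinely difficult, one could equally well just quote the reference; a reader who prefers the one-line version may note simply that $G$ is compact Hausdorff, hence a Baire space, hence not a countable union of closed sets all of empty interior. I therefore do not anticipate any real obstacle in this proof.
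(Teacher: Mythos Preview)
Your proof is correct and is precisely the standard Baire Category argument the paper has in mind; indeed, the paper does not give a proof at all but simply cites this as a well-known consequence of the Baire Category Theorem \cite[Theorem~34]{kel}. So your approach is the same, just more explicit.
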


We now recall some general properties of Engel sinks.
Clearly, the intersection of two left Engel sinks of a given element $g$ of a group $G$ is again a left Engel sink of $g$, with the corresponding function $l(x,g)$ being the maximum of the two functions. Therefore, if $g$ has a \textit{finite} left Engel sink, then $g$ has a unique smallest left  Engel sink, which  has the following  characterization.

\begin{lemma}[{\cite[Lemma~2.1]{khu-shu162}}]\label{l-min} If an element $g$ of a group $G$ has a finite left Engel sink, then $g$ has a smallest left Engel sink $\mathscr E (g)$
and for every $s\in \mathscr E (g)$ there is an integer $k\geq 1$ such that  $s=[s,\,{}_kg]$.
\end{lemma}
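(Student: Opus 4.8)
The plan is to combine the remark made just before the statement---that a finite intersection of left Engel sinks of $g$ is again a left Engel sink of $g$---with a pigeonhole argument for the self-map $s\mapsto [s,g]$ on a finite set.

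First I would establish the existence of a smallest left Engel sink. Fix any finite left Engel sink $S$ of $g$. There are only finitely many subsets of $S$, hence only finitely many left Engel sinks of $g$ that are contained in $S$; let $\mathscr{E}(g)$ be their intersection. Being a finite intersection of left Engel sinks, $\mathscr{E}(g)$ is itself a left Engel sink (take $l(x,g)$ to be the maximum of the finitely many associated functions). If $T$ is \emph{any} left Engel sink of $g$, then $T\cap S$ is a left Engel sink contained in $S$, so $\mathscr{E}(g)\subseteq T\cap S\subseteq T$. Thus $\mathscr{E}(g)$ is contained in every left Engel sink of $g$, so it is the smallest one.

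For the stated characterization, set $E_0=\{\,s\in\mathscr{E}(g)\ :\ s=[s,\,{}_kg]\text{ for some }k\geq 1\,\}$. It suffices to show that $E_0$ is a left Engel sink of $g$: then $E_0\subseteq\mathscr{E}(g)$ together with the minimality of $\mathscr{E}(g)$ forces $E_0=\mathscr{E}(g)$, which is exactly the assertion. To see that $E_0$ is a left Engel sink, fix $x\in G$ and write $a_l=[x,\,{}_lg]$, so that $a_{l+1}=[a_l,g]$ for all $l$. Since $\mathscr{E}(g)$ is a left Engel sink, there is $l_0$ with $a_l\in\mathscr{E}(g)$ for all $l\geq l_0$; as $\mathscr{E}(g)$ is finite, the pigeonhole principle yields indices $l_0\leq i<j$ with $a_i=a_j$. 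Because each term of the sequence determines the next, an easy induction shows that $a_{l+k}=a_l$ for all $l\geq i$, where $k:=j-i\geq 1$. Hence $a_l=[a_l,\,{}_kg]$ for all $l\geq i$, and since also $a_l\in\mathscr{E}(g)$ for such $l$, we conclude $a_l\in E_0$ for all $l\geq i$. Thus $[x,\,{}_lg]\in E_0$ for all $l\geq i$, so $E_0$ is a left Engel sink.

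There is no serious obstacle here; the only points requiring care are that the set $S$ in the first step must be taken \emph{finite} (an intersection of infinitely many left Engel sinks of $g$ need not be one, since the functions $l(x,g)$ may be unbounded), and that in the second step one must extract a genuine period for the whole tail of the sequence $(a_l)$, not merely a single repeated value.
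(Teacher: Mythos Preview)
Your proof is correct. Note that the paper itself does not give a proof of this lemma at all: it is quoted verbatim from \cite[Lemma~2.1]{khu-shu162}, and the text immediately preceding the statement already records the key observation that a (finite) intersection of left Engel sinks is again a left Engel sink. Your argument is the natural one---reduce to finitely many sinks by intersecting with a fixed finite sink $S$, then use pigeonhole on the orbit $a_l\mapsto a_{l+1}=[a_l,g]$ to exhibit eventual periodicity---and this is precisely the approach of the original source, so there is nothing further to compare.
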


 The intersection of two right Engel sinks of a given element $g$ of a group $G$ is again a right Engel sink of $g$, with the corresponding function $r(x,g)$ being the maximum of the two functions. Therefore, if $g$ has a \textit{finite} right Engel sink, then $g$ has a unique smallest right Engel sink, which is henceforth denoted by ${\mathscr R}(g)$. It has  the following  characterization.

\begin{lemma}[{\cite[Lemma~2.2]{khu-shu172}}]\label{l-min-r}
If an element $g$ of a group $G$ has a finite right Engel sink, then $g$ has a smallest right Engel sink $\mathscr R(g)$ and for every $z\in \mathscr R(g)$ there are integers $n\geq 1$ and $m\geq 1$ and an element $x\in G$ such that $z=[g,{}_nx]=[g,{}_{n+m}x]$.
\end{lemma}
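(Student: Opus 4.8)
The plan is to prove the two assertions separately: first the existence of a smallest right Engel sink, and then the arithmetic characterisation of its elements via a pigeonhole argument.

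For the existence, I would argue exactly as in the paragraph preceding the statement. Since the intersection of two right Engel sinks of $g$ is again a right Engel sink (taking for the new function $r(x,g)$ the maximum of the two given ones), and since by hypothesis $g$ has \emph{some} finite right Engel sink $S$, the family of right Engel sinks of $g$ contained in $S$ is finite, being a subfamily of the power set of $S$. Hence its intersection is a right Engel sink, call it $\mathscr R(g)$; it is finite. Moreover $\mathscr R(g)$ is contained in \emph{every} right Engel sink $T$ of $g$, because $T\cap S$ is a right Engel sink contained in $S$ and therefore contains the intersection $\mathscr R(g)$. So $\mathscr R(g)$ is genuinely the smallest right Engel sink of $g$.

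For the characterisation, I would fix this finite set $\mathscr R(g)$ and, for each $x\in G$, look at the sequence $c_r=[g,{}_rx]$, $r\geq 1$, which satisfies $c_r\in\mathscr R(g)$ for all $r\geq r(x,g)$. Since $\mathscr R(g)$ is finite, only finitely many of its elements can occur merely finitely often among the $c_r$; let $T(x)\subseteq\mathscr R(g)$ be the subset of elements occurring infinitely often. Discarding the finitely many indices at which the remaining elements of $\mathscr R(g)$ make their last appearance, one sees that $c_r\in T(x)$ for all sufficiently large $r$. Consequently $\bigcup_{x\in G}T(x)$ is again a right Engel sink of $g$, and since each $T(x)\subseteq\mathscr R(g)$, minimality of $\mathscr R(g)$ forces $\mathscr R(g)=\bigcup_{x\in G}T(x)$. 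Now, given $z\in\mathscr R(g)$, choose $x$ with $z\in T(x)$; as $z=c_r$ for infinitely many $r$, we may pick $n\geq 1$ and then $m\geq 1$ with $c_n=c_{n+m}=z$, i.e.\ $z=[g,{}_nx]=[g,{}_{n+m}x]$, which is what is claimed.

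The argument is essentially elementary and uses no topology on $G$, so it applies to arbitrary abstract groups. The only point that needs a little care is the verification that $\bigcup_{x}T(x)$ is a right Engel sink, that is, that for a fixed $x$ the elements of $\mathscr R(g)$ appearing only finitely often along $([g,{}_rx])_r$ may be ignored once $r$ is large; this is precisely where the finiteness of $\mathscr R(g)$ is used in an essential way, and I do not foresee any further obstacle.
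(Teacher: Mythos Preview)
Your argument is correct. The paper does not reproduce a proof of this lemma but cites \cite[Lemma~2.2]{khu-shu172}; your proof is precisely the natural elementary argument one would expect there, combining the closure of right Engel sinks under finite intersections with a pigeonhole step on the sequence $[g,{}_rx]$ to show that the set of infinitely recurring values is itself a sink and hence, by minimality, exhausts $\mathscr R(g)$.
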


\noindent (Here, the elements $x$ and numbers $m,n$ can be different for different $z$.)

Furthermore, for metabelian groups we have the following.

\begin{lemma}[{\cite[Lemma~2.5]{khu-shu172}}]\label{l-metab}
 If $G$ is a metabelian group, then a right Engel sink of the inverse $g^{-1}$ of an element $g\in G$ is a left Engel sink of $g$.
\end{lemma}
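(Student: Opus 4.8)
The plan is to translate everything into the derived subgroup $A=[G,G]$, which is abelian since $G$ is metabelian, viewed additively as a module over the group ring $R=\mathbb{Z}[G/A]$: for $a\in A$ and $u\in G$ the conjugate $u^{-1}au$ depends only on $\bar u=uA$, and I write $a\bar u$ for it. The workhorse will be that $R$ is commutative (because $G/A$ is abelian). I would first record two routine identities valid in any metabelian group: (i) $[a,u]=a^{-1}a^{u}=a(\bar u-1)$ for $a\in A$, $u\in G$, so that $[a,{}_{k}u]=a(\bar u-1)^{k}$; and (ii) $[u^{-1},v]=([u,v]^{-1})^{u^{-1}}=-[u,v]\bar u^{-1}$ for $u,v\in G$ (the first equality is the standard commutator identity, the second uses $[u,v]\in A$). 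Applying (i) with $a=[x,g]$ gives, for every $x\in G$ and $l\ge 1$,
\[
[x,{}_{l}g]=[x,g](\bar g-1)^{l-1}.
\]

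Next I would look for a single element $y=y(x,g)$ for which iterating $y$ on $g^{-1}$ reproduces the right-hand side. The choice that works is $y=[x,g]g$; note $\bar y=\bar g$ and $y=ga$ with $a=[x,g]^{g}\in A$ (that is, $a=[x,g]\bar g$ in module notation). A direct computation gives $[g,y]=g^{-1}a^{-1}ga=-a(\bar g-1)$, hence by (ii) $[g^{-1},y]=a(\bar g-1)\bar g^{-1}$, and then by (i) together with the commutativity of $R$,
\[
[g^{-1},{}_{r}y]=[g^{-1},y](\bar y-1)^{r-1}=a(\bar g-1)\bar g^{-1}(\bar g-1)^{r-1}=a\bar g^{-1}(\bar g-1)^{r}=[x,g](\bar g-1)^{r}
\]
for all $r\ge 1$, the last equality using $a\bar g^{-1}=[x,g]$. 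Comparing with the previous display yields the key identity $[x,{}_{l}g]=[g^{-1},{}_{l-1}y]$ for all $l\ge 2$.

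With this identity the transfer of sinks is immediate. Let $\mathscr R(g^{-1})$ be any right Engel sink of $g^{-1}$ and let $x\in G$. For $y=[x,g]g$ there is $r_{0}=r(y,g^{-1})$ with $[g^{-1},{}_{r}y]\in\mathscr R(g^{-1})$ for all $r\ge r_{0}$; hence $[x,{}_{l}g]=[g^{-1},{}_{l-1}y]\in\mathscr R(g^{-1})$ for all $l\ge r_{0}+1$. As $x$ was arbitrary, $\mathscr R(g^{-1})$ satisfies the defining property of a left Engel sink of $g$, with controlling function $l(x,g)=r([x,g]g,g^{-1})+1$.

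The step I expect to require the actual idea is the choice of $y$. The two commutator families $[x,{}_{l}g]$ and $[g^{-1},{}_{r}y]$ iterate different group elements, so naive guesses such as $y=x^{-1}$ or $y=g^{x}$ fail to match them termwise; the point is to take $y$ inside the coset $gA$ (so that repeated commutation with $y$ and with $g$ have the same effect on $A$) and to calibrate the $A$-component of $y$ so that the initial term $[g^{-1},y]$ equals $[x,g](\bar g-1)$, after which commutativity of $R$ does the rest. Everything else is elementary commutator calculus; no topology enters, so the argument is purely algebraic and applies to an arbitrary metabelian group.
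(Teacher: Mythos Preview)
Your argument is correct. The module--theoretic set-up is exactly right for a metabelian group, the identities (i) and (ii) are standard, and the key computation with $y=[x,g]g$ checks out line by line: since $y\in gA$ one has $\bar y=\bar g$, so repeated commutation with $y$ on elements of $A$ is the same as repeated commutation with $g$, and the choice of the $A$-component of $y$ is precisely what makes the initial term match. The resulting termwise identity $[x,{}_{l}g]=[g^{-1},{}_{l-1}y]$ for all $l\ge 2$ immediately transfers any right Engel sink of $g^{-1}$ to a left Engel sink of $g$, as you observe.

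As for comparison with the paper: the present paper does not prove this lemma at all --- it merely quotes it as \cite[Lemma~2.5]{khu-shu172}. So there is no in-text proof to compare against. The argument in the cited source also exploits the $\mathbb{Z}[G/A]$-module structure of $A$ and commutativity of that ring; your particular device of selecting $y=[x,g]g$ to obtain an exact termwise equality (rather than only an inclusion of sinks) is a neat variant that makes the transfer completely explicit.
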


\begin{remark} If $\f$ is an automorphism of finite order $p$ of a profinite group $G$ and $H$ is an open normal subgroup of $G$, then $\bigcap_{i=0}^{p-1}H^{\f ^i}$ is a $\f$-invariant open normal subgroup. Thus, $\f$\text{-invariant} open normal subgroups of $G$ form a base of neighbourhoods of $1$ in the profinite topology. We  freely use this property throughout the paper without special references.
\end{remark}

\begin{remark} If every element of a subgroup $C$ has a finite right Engel sink in a group $G$, then this condition is inherited by the image of $C\cap A$ in every section $A/B$, and we shall use this property without special references. The same applies to a subgroup in which every element has a finite left Engel sink. \end{remark}

 Throughout the paper, we write, say, ``$(a,b,\dots )$-bounded'' to abbreviate ``bounded above in terms of $a, b,\dots $ only''.

\section{Right Engel sinks}\label{s-r}

In this section we prove Theorem~\ref{t-r} concerning right Engel sinks of fixed points of an automorphism.

\begin{lemma}\label{l-pron}
  If $G$ is a pronilpotent group and an element $g\in G$ has a finite right Engel sink, then in fact $ \mathscr R(g)=\{1\}$, that is, $g$ is a right Engel element.
\end{lemma}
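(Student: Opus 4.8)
The plan is to reduce to the case of a pro-$p$ group via the decomposition of a pronilpotent group as the Cartesian product of its Sylow subgroups, and then to work inside a single pro-$p$ group $P$, where the key point is that $\gamma_\infty(P)=1$ forces the sink to collapse. First I would observe that it suffices to treat the pro-$p$ case: writing $G=\prod_q G_q$ with $G_q$ the Sylow $q$-subgroup, any element $g$ decomposes as $g=\prod_q g_q$, and the commutator map respects this product decomposition, so $\mathscr R(g)=\prod_q \mathscr R_q(g_q)$ where $\mathscr R_q(g_q)$ is a finite right Engel sink of $g_q$ in $G_q$; if each $\mathscr R_q(g_q)=\{1\}$ then $\mathscr R(g)=\{1\}$. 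Actually one must be slightly careful that the Cartesian product of trivial sinks is trivial, which is clear, and that finiteness of $\mathscr R(g)$ descends to each factor, which it does since $G_q$ is a quotient of $G$.

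So assume $G=P$ is a pro-$p$ group and $g\in P$ has a finite right Engel sink; by Lemma~\ref{l-min-r} we may take the smallest one $\mathscr R(g)$, and for each $z\in\mathscr R(g)$ there are $n,m\geq 1$ and $x\in P$ with $z=[g,{}_nx]=[g,{}_{n+m}x]$. The main step is to show each such $z$ equals $1$. Fix $z\ne 1$ and suppose for contradiction $z\in\mathscr R(g)$. Pass to a finite quotient $\bar P=P/N$ (with $N$ open normal) in which $\bar z\ne 1$; then $\bar z=[\bar g,{}_n\bar x]=[\bar g,{}_{n+m}\bar x]$ still holds, so the sequence $[\bar g,{}_k\bar x]$ for $k\ge n$ is eventually periodic with period dividing $m$, and $\bar z$ lies on the cycle. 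Now I want to derive that $\bar z$ has order dividing some power related only to the structure, and use that the orbit of the map $y\mapsto [y,\bar x]$ on a finite $p$-group is eventually periodic — but the real leverage is this: in a finite $p$-group, the map $c_{\bar x}:y\mapsto[y,\bar x]$ sends $\bar P$ into $[\bar P,\bar x]$ and iterating lands in $\gamma_k$-type subgroups of $\langle \bar P,\bar x\rangle$; since that ambient group is a finite $p$-group it is nilpotent, so $[\bar g,{}_k\bar x]=1$ for $k$ large, forcing $\bar z=1$, a contradiction.

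The point I expect to be the main obstacle is making the last sentence precise: $[g,{}_kx]$ is a commutator in the subgroup $\langle g,x\rangle$ (or rather $[\langle g\rangle^{\langle x\rangle},x]$), which need not be the whole of a finite $p$-group — but any finitely generated subgroup of a finite quotient of a pro-$p$ group is a finite $p$-group, hence nilpotent, so $[g,{}_kx]$ lies in $\gamma_{k+1}(\langle g,x\rangle)=1$ for $k$ large in that quotient. Thus in every finite quotient $\bar z=1$, and since $P$ is the inverse limit of its finite quotients, $z=1$. Hence $\mathscr R(g)=\{1\}$ and $g$ is a right Engel element of $P$. Combining with the product reduction gives the lemma. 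I would present the pro-$p$ reduction first, then the finite-quotient argument, keeping the verification that finitely generated subgroups of finite $p$-groups are nilpotent as the one genuinely load-bearing classical fact.
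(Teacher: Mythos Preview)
Your argument is correct and at its core identical to the paper's: use Lemma~\ref{l-min-r} to write each $z\in\mathscr R(g)$ as $z=[g,{}_nx]=[g,{}_{n+m}x]$, then observe that in any nilpotent finite quotient such an eventually periodic tail of commutators must be trivial. However, your reduction to the pro-$p$ case is superfluous: a pronilpotent group already has nilpotent finite quotients by definition, so the finite-quotient step works directly in $G$ without decomposing into Sylow factors. The paper does exactly this in three lines, choosing a single open normal $N$ with $\mathscr R(g)\cap N=\{1\}$ (possible since $\mathscr R(g)$ is finite) and noting that every $z\in\mathscr R(g)$ has trivial image in the nilpotent group $G/N$, hence lies in $N\cap\mathscr R(g)=\{1\}$. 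Dropping the Cartesian-product preamble would streamline your write-up without losing anything.
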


\begin{proof}
  Since $ \mathscr R(g)$ is finite, there is an open normal subgroup $N$ such that $ \mathscr R(g)\cap N=\{1\}$. If $z\in \mathscr R(g)$, then by Lemma~\ref{l-min-r}  there are integers $n\geq 1$ and $m\geq 1$ and $x\in G$ such that $z=[g,{}_nx]=[g,{}_{n+m}x]$. Therefore the image of $z$ in $G/N$ must be trivial, since $G/N$ is nilpotent. Hence $z\in N\cap \mathscr R(g)=\{1\}$.
\end{proof}

Combining Lemma~\ref{l-pron} with Theorem~\ref{t-aks} we obtain the following.

\begin{corollary}\label{c-pron}
     If  a pronilpotent group $G$ admits a coprime automorphism of prime order such that every fixed point has a finite right Engel sink, then $G$ is locally nilpotent.
\end{corollary}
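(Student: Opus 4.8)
The plan is to obtain this as a direct combination of Lemma~\ref{l-pron} with Theorem~\ref{t-aks}, so the proof is essentially a two-line reduction. First I would note that by hypothesis every element $g\in C_G(\f)$ has a finite right Engel sink in $G$. Since $G$ is pronilpotent, Lemma~\ref{l-pron} applies to each such $g$ and forces $\mathscr R(g)=\{1\}$; in other words, every fixed point of $\f$ is in fact a right Engel element of $G$. (Recall that the point of Lemma~\ref{l-pron} is that, in a pronilpotent group, for any $z$ in the smallest right Engel sink of $g$ one has $z=[g,{}_nx]=[g,{}_{n+m}x]$ by Lemma~\ref{l-min-r}, so $z$ dies in every finite nilpotent quotient; choosing an open normal $N$ with $\mathscr R(g)\cap N=\{1\}$ then gives $z=1$.)

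Having reduced the hypothesis of Corollary~\ref{c-pron} to ``every element of $C_G(\f)$ is a right Engel element of $G$'', I would simply invoke Theorem~\ref{t-aks}, which gives that $G$ is locally nilpotent. There is no genuine obstacle here: all the work is carried out in Lemma~\ref{l-pron}, which upgrades finiteness of the sink to triviality of the sink in the pronilpotent setting, and in the previously established Theorem~\ref{t-aks}; the corollary records the consequence in the special case that $G$ itself is already pronilpotent, which will later serve as the base case for handling $F(G)$ and the pronilpotent sections arising in the proof of Theorem~\ref{t-r}.
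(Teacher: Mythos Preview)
Your proposal is correct and matches the paper's approach exactly: the corollary is stated immediately after Lemma~\ref{l-pron} with the sentence ``Combining Lemma~\ref{l-pron} with Theorem~\ref{t-aks} we obtain the following'', and your two-line reduction is precisely that combination.
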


The following quantitative version of Theorem~\ref{t-r} for finite groups was proved in \cite{khu-shu204}.

\begin{theorem}[{\cite[Theorem~1.4]{khu-shu204}}]\label{t-bmjr}
Let $G$ be a finite group admitting an automorphism $\f$ of prime order coprime to $|G|$. Let $m$ be a positive integer such that every element $g\in C_G(\f)$ has a right Engel sink ${\mathscr R}(g)$ of cardinality at most $m$. Then $G$ has a nilpotent normal subgroup of $m$-bounded index.
\end{theorem}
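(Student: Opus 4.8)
The plan is to prove the theorem in three steps: first reduce to the case of soluble $G$; then extract a module-theoretic consequence of the Engel sink hypothesis from a short commutator computation; and finally assemble these, using the coprime action of $\f$, into a bound for $|G/F(G)|$.

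First I would reduce to the soluble case. The hypothesis is inherited by every $\f$-invariant section $A/B$: by coprimality $C_{A/B}(\f)$ is the image of $C_A(\f)$, and for $g\in C_A(\f)$ the set $\mathscr R(g)\cap A$ is a right Engel sink of $g$ in $A$ of order at most $m$, whose image in $A/B$ is a right Engel sink of the corresponding fixed point. If $A/B$ is a nonabelian chief factor of $G$, then $A/B\cong S^{k}$ with $\f$ permuting the simple factors in orbits of length $1$ or $p$; a coprime automorphism of prime order of a nonabelian simple group has nontrivial fixed points, and projecting a nontrivial element of $C_{A/B}(\f)$ onto one factor (or onto the diagonal of one orbit, again isomorphic to $S$) shows that every element of $S$ has a right Engel sink of order at most $m$ in $S$. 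Since a nonabelian simple group with this property has $m$-bounded order (a known consequence of the classification), and since a fixed point with nontrivial projection to several orbits has a right Engel sink of order at least $2$ raised to the number of those orbits — which moreover forces $p\le\log_2 m$ whenever some orbit has length $p$, so that $p$ does not really enter the bounds — it follows that $|A/B|$ is $m$-bounded and that $G$ has only $m$-boundedly many nonabelian chief factors. Hence, writing $R$ for the soluble radical, $G/R$ embeds in $\mathrm{Aut}(F^{*}(G/R))$, a group of $m$-bounded order; as $F(R)$ is a nilpotent normal subgroup of $G$, it is enough to treat soluble groups.

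Next comes the key commutator identity. Suppose $g\in G$ normalizes an elementary abelian $s$-subgroup $A$ of a $\f$-invariant section of $G$, and write $A$ additively with $g$ acting as the invertible operator $\bar g$. For $a\in A$ and $x=ag$, an easy induction gives
\[
[g,{}_{n}x]=-\bar g(\bar g-1)^{n}a\qquad\text{for all }n\ge1.
\]
Since $A$ is finite, letting $a$ vary and $n$ grow shows that $\mathrm{im}(\bar g-1)^{\infty}\subseteq\mathscr R(g)$; and decomposing $g=g_{s}g_{s'}$ into its $s$- and $s'$-parts, so that $\bar g_{s}$ is unipotent and $\bar g_{s'}$ semisimple, one identifies $\mathrm{im}(\bar g-1)^{\infty}=[A,g_{s'}]$. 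Consequently, if $g\in C_G(\f)$, then $|[A,g_{s'}]|\le m$ for every such $A$.

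Finally I would bound $|G/F(G)|$ for soluble $G$; since the largest nilpotent normal subgroup of $G$ is $F(G)$, this suffices. Using \eqref{e-phi} to pass to $G/\Phi(F(G))$ we may assume $F(G)$ is a direct product of elementary abelian $s$-groups $V_{s}$, so that $G/F(G)$ embeds faithfully in $\prod_{s}\mathrm{Aut}(V_{s})$; by the identity above, every $s'$-element of $C_{G/F(G)}(\f)$ acts on $V_{s}$ with commutator subspace of dimension at most $\log_{2}m$. The remaining task — and the main obstacle — is to turn this fixed-point information into a bound on the whole group: arguing one prime at a time, one must show that a soluble linear group over $\F_{s}$ admitting a coprime automorphism of prime order all of whose $s'$-fixed points have small commutator spaces has $m$-bounded order, that only $m$-boundedly many primes $s$ occur, and that the ``unipotent over $s$'' part of the action — which the right Engel sink does not see, and on which $\f$ may even act fixed-point-freely — can be absorbed, because a normal subgroup of $G/F(G)$ acting unipotently on one $V_{s}$ and trivially on the others lifts to a nilpotent normal subgroup of $G$ and is hence trivial. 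This is where the structure theory of soluble linear groups (Hall--Higman-type results) and the quantitative theory of coprime automorphisms of prime order (Thompson's theorem and refinements bounding the Fitting height) have to be brought in. Once $|G/F(G)|$ is bounded, $F(G)$ is a nilpotent normal subgroup of $m$-bounded index, and undoing the reductions completes the proof.
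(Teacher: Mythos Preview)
The paper does not prove this theorem at all: it is quoted verbatim from \cite[Theorem~1.4]{khu-shu204} and used as a black box in the proof of Theorem~\ref{t-r}. So there is no ``paper's own proof'' to compare your attempt against; any assessment has to be of your sketch on its own merits (or against the argument in \cite{khu-shu204}, which is outside the present paper).

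On the substance of your sketch: the commutator identity $[g,{}_{n}x]=-\bar g(\bar g-1)^{n}a$ for $x=ag$ is correct and is indeed the right way to read off $[A,g_{s'}]\subseteq\mathscr R(g)$ from the hypothesis. The reduction to the soluble case is plausible in outline, but your step bounding the \emph{number} of nonabelian chief factors is not justified: chief factors are sections, not direct factors of $G$, so you cannot simply multiply sinks across different chief factors as you do within a single $S^{k}$. More importantly, you yourself flag the final step --- passing from ``every $s'$-fixed point of $\f$ in $G/F(G)$ has commutator space of size $\le m$ on each $V_{s}$'' to ``$|G/F(G)|$ is $m$-bounded'' --- as the main obstacle and leave it open. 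That is precisely where the real work lies (bounding the Fitting height, controlling the number of relevant primes, and handling the unipotent part on which the sink gives no information), and without it what you have is an outline rather than a proof.
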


In the proof of Theorem~\ref{t-r} we will combine this result  with Corollary~\ref{c-pron} and a reduction to the case of uniformly bounded sizes of right Engel sinks of fixed points.

\begin{proof}[Proof of Theorem~\ref{t-r}]
Recall that $G$ is a profinite group admitting a coprime automorphism $\f$ of prime order such all fixed points of $\f$ have finite right Engel sinks; we need to produce an open locally nilpotent subgroup.
By Corollary~\ref{c-pron} any  $\f$-invariant pronilpotent subgroup of $G$ is locally nilpotent and therefore it is sufficient to produce an open pronilpotent subgroup.

Let $g\in C_G(\f)$  and let $N_g$ be an  open normal subgroup such that $N_g\cap {\mathscr R}(g)=\{1\}$. Then $g$ is a right Engel element of the subgroup $N_g\langle g\rangle $. By Baer's theorem \cite[12.3.7]{rob}, in every finite quotient of $N_g\langle g\rangle$ the image of $g$ belongs to the hypercentre. Therefore the subgroup $[N_g, g]$ is pronilpotent.

Let $\widetilde N_g$ be the normal closure of $[N_g, g]$ in $G$. Since $[N_g, g]$ is normal in the subgroup $N_g$, which has finite index, $[N_g, g]$ has only finitely many conjugates. Hence $\widetilde N_g$ is a product of finitely many normal subgroups of $N_g$, each of which is pronilpotent, and therefore $\widetilde N_g$ is pronilpotent. Therefore all the subgroups $\widetilde N_g$ are contained in the largest normal pronilpotent subgroup $F(G)$.

The image $\bar g$ of every element $g\in C_G(\f )$ in $\widebar G=G/F(G)$ has finite conjugacy class $\bar g^G$, since $[g,N_g]\leq F(G)$ and $N_g$ has finite index in $G$. We now use a strengthened version of Neumann's theorem about $BFC$-groups and a lemma about finite conjugacy classes in profinite groups from the recent paper of Acciarri and Shumyatsky~\cite{acc-shu}. Namely, by \cite[Lemma~4.2]{acc-shu} there is an integer $n$ such that $|\bar g^G|\leq n$ for every  $\bar g\in  C_{\widebar{G}}(\f )$. Let $H = \langle C_{\widebar{G}}(\f )^G\rangle $ be the abstract normal closure of $C_{\widebar{G}}(\f )$ in $\widebar G$. Then by  \cite[Theorem~1.1]{acc-shu} the derived subgroup $H'$ is finite (of $n$-bounded order). In particular, $H'$ is a closed subgroup of $\widebar G$. Let $\widetilde H$  be the topological closure of $H$ in $\widebar G$. Since $H/H'$ is abelian, $\widetilde H/H'$ is also abelian. (We had to consider the abstract normal closure first, since \cite[Theorem~1.1]{acc-shu} is stated for abstract groups; but it is clear that it also works for profinite groups as shown above.)

Note that  $C_{\widebar{G}}(\f )\leq \widetilde H$ and therefore $\widebar G/\widetilde H$ is nilpotent by the theorems of Thompson \cite{tho} and Higman \cite{hig}. Let $N$ be a   $\f$-invariant open normal subgroup of $G$ containing $F(G)$ such that $\widebar N\cap H'=1$. Then $N/F(N)$ is abelian-by-nilpotent. Replacing $G$ with $N$ we can assume from the outset that $G/F(G)$ is soluble and proceed by induction on the derived length of it.

The main case is when $G/F(G)$ is abelian. Indeed,  in the general case,  by induction hypothesis,   $G'F(G)$ has a $\f$-invariant open pronilpotent  subgroup $M$. Since $G'M/M$ is finite, 
there is a $\f$-invariant open normal  subgroup $N$   such that $N\cap G' =M$. Note that $F(N)\geq M$. Then $N/F(N)$ is abelian, so we may assume that $G/F(G)$ is abelian from the outset. We need to show that $G/F(G)$ is finite.

We write $F=F(G)$ to lighten the notation.  Since $F(G/\Phi (F))=F/\Phi (F)$ by \eqref{e-phi},  we can assume that $\Phi (F)=1$. In particular, then $G$ is metabelian.

\begin{lemma}\label{l-cfin}
   $C_G(\f)F/F$ is finite.
\end{lemma}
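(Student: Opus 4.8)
We are in the situation where $G$ is a profinite group, $\f$ is a coprime automorphism of prime order, every element of $C_G(\f)$ has a finite right Engel sink, $F = F(G)$ satisfies $\Phi(F) = 1$, $G/F$ is abelian, and hence $G$ is metabelian. We want to show $C_G(\f)F/F$ is finite. Because $\Phi(F) = 1$, the pronilpotent group $F$ is the Cartesian product of its Sylow subgroups $F_p$, each of which is elementary abelian (a $\f$-module over $\F_p$), and $F = C_F(\f)\times [F,\f]$ by coprime action. The plan is to pass to finite quotients: since $C_{G/N}(\f) = C_G(\f)N/N$ for $\f$-invariant open normal $N$, and $F(G/N) \supseteq FN/N$, it suffices to bound $|C_G(\f)F/F|$ by bounding, uniformly over finite $\f$-invariant quotients $\bar G = G/N$ with $\bar F = FN/N$, the order of $C_{\bar G}(\f)\bar F/\bar F$ — provided we can arrange such a uniform bound. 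The key point to exploit is that the finite right Engel sinks, being finite, lie inside a fixed open subgroup once we intersect finitely many of them.

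**Reduction to bounded sink sizes and the finite case.**
The natural first move is a Baire category / Theorem~\ref{bct} argument: write $C_G(\f) = \bigcup_{m} \{g \in C_G(\f) : |\mathscr R(g)| \le m\}$. Each set in this union is closed (this is where one checks that having a right Engel sink of size at most $m$ is a closed condition, using Lemma~\ref{l-min-r} to express sink elements via commutators), so some $\{g \in C_G(\f) : |\mathscr R(g)| \le m\}$ contains a coset $gC_G(\f)\cap U$ for an open subgroup, i.e. contains an open subset of $C_G(\f)$. Translating, there is an open $\f$-invariant normal subgroup $N_0$ of $G$ and an $m$ such that every $g$ in some coset of $N_0 \cap C_G(\f)$ has sink of size $\le m$; combining with the fact that products and the fixed element $g_0$ absorb boundedly, one deduces that \emph{all} of $C_{N_0}(\f)$ (or a bounded-index piece) has right Engel sinks of bounded size. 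Then for every finite $\f$-invariant quotient $\bar G = G/N$ with $N \le N_0$, the group $\overline{N_0}$ is a normal subgroup on which every fixed point of $\f$ has a right Engel sink of size at most, roughly, $m$ (sink sizes only shrink in quotients). Apply Theorem~\ref{t-bmjr} to $\overline{N_0}$ with its induced $\f$-action: $\overline{N_0}$ has a nilpotent normal subgroup of $m$-bounded index, hence $\overline{N_0}F(\bar G)/F(\bar G)$, and so $C_{\overline{N_0}}(\f)F(\bar G)/F(\bar G)$, has $m$-bounded order. Since $\overline{N_0}$ has bounded index in $\bar G$ and $\bar F \subseteq F(\bar G)$, we get an $m$-bounded bound on $|C_{\bar G}(\f)\bar F/\bar F|$, uniform over all such $N$; passing to the inverse limit gives $C_G(\f)F/F$ finite.

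**The main obstacle.**
I expect the delicate point to be the topological bookkeeping in the Baire-category step: one must verify that $\{g \in C_G(\f) : |\mathscr R(g)| \le m\}$ is genuinely closed in $C_G(\f)$. This is not purely formal because "$\mathscr R(g)$ has size $\le m$" is a statement about the \emph{smallest} sink, and one has to show that if $g_i \to g$ with each $g_i$ having a sink of size $\le m$, then the limit does too — the standard way (used in \cite{khu-shu172, khu-shu204}) is to note that for each $x$, the eventual value-set of $\{[g,{}_r x] : r \ge r_0\}$ is controlled, pass to a common open subgroup avoiding all sinks, and use a compactness/pigeonhole argument on the finitely many relevant quotients. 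A secondary subtlety is promoting "bounded sinks on a coset of $N_0 \cap C_G(\f)$" to "bounded sinks on $C_{N_0}(\f)$": one uses that $\mathscr R(hk) \subseteq$ something controlled by $\mathscr R(h)$ and $\mathscr R(k)$ up to bounded factors, together with $\f$-invariance, to spread the bound from one coset to the whole subgroup (at the cost of enlarging $m$ by a bounded amount). Once these two points are in place, the invocation of Theorem~\ref{t-bmjr} and the inverse-limit passage are routine.
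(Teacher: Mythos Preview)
The genuine gap in your proposal is the spreading step, which you flag as a ``secondary subtlety'' but is actually the heart of the matter. You write that one ``uses that $\mathscr R(hk) \subseteq$ something controlled by $\mathscr R(h)$ and $\mathscr R(k)$'' to pass from a bound on a coset $c_0 C_1$ to a bound on all of $C_1$ (equivalently $C_{N_0}(\f)$). But the only multiplicativity available for right Engel sinks here is $\mathscr R(ab) \subseteq \mathscr R(a)\mathscr R(b)$ when $a, b$ lie in the \emph{abelian normal subgroup} $F$, since only then does $[ab,{}_n x] = [a,{}_n x]\,[b,{}_n x]$ hold for all $x\in G$. At this point in the argument you do not know that $C_G(\f) \leq F$; indeed, that containment is precisely what Lemma~\ref{l-cfin} is subsequently used to arrange in the main proof, and only \emph{after} that reduction does the paper run the Baire-plus-Theorem~\ref{t-bmjr} argument you describe. (By contrast, the closedness of the sink-size level sets, which you single out as the main obstacle, is routine from Lemma~\ref{l-min-r}.)

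The paper circumvents this by switching to \emph{left} Engel sinks via Lemma~\ref{l-metab}: in a metabelian group $\mathscr E(g) \subseteq \mathscr R(g^{-1})$, so every element of $C_G(\f)$ also has a finite left Engel sink. The Baire argument is then run on the sets $E_k = \{x \in C_G(\f): |\mathscr E(x)| \le k\}$. For left sinks the spreading works by a different mechanism that does not require the elements to lie in $F$: in a metabelian group each $\mathscr E(g)$ is a normal subgroup, and in the quotient by $\mathscr E(c_0)\mathscr E(c_0x)$ both $G'\langle c_0\rangle$ and $G'\langle c_0 x\rangle$ are normal locally nilpotent, so by the Hirsch--Plotkin theorem their product is locally nilpotent, whence $\mathscr E(x) \leq \mathscr E(c_0)\mathscr E(c_0 x)$ and $|\mathscr E(x)|\le m^2$ for all $x\in C_1$. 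Finally, rather than invoking Theorem~\ref{t-bmjr}, the paper finishes with the elementary Lemma~\ref{l-local}: for each prime $q$, a Sylow $q$-subgroup $Q$ of $C_1 F$ acts coprimely on the abelian Hall $q'$-subgroup $F_{q'}$ with $[F_{q'}, u]=\mathscr E(u)$ of order at most $m^2$ for every $u\in Q$, which bounds $|QF/F|$ uniformly and hence makes $C_1F/F$, and therefore $C_G(\f)F/F$, finite.
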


\begin{proof}
  By Lemma~\ref{l-metab} we have ${\mathscr E}(g)\subseteq {\mathscr R}(g^{-1})$ in a metabelian group. Hence all elements of $C_G(\f)$ have finite left Engel sinks. Every subset $E_k=\{x\in C_G(\f )\mid |{\mathscr E}(x)|\leq k\}$ is closed in the induced topology of  $C_G(\f )$. Indeed,  this is equivalent to the complement of $C_G(\f )\setminus E_k$ being an open subset of $C_G(\f )$. For every element $g\in C_G(\f )\setminus E_k$ we have $|{\mathscr E}(g)|\geq k+1$, so there are distinct elements $z_1,z_2,\dots ,z_{k+1}\in {\mathscr E}(g)$. By Lemma~\ref{l-min}   we can write for every $i=1,\dots ,k+1$
\begin{equation}\label{e-open}
z_i=[z_i,g,\dots ,g], \quad \text{where } g \text{ is repeated } k_i\geq 1 \text{ times}.
 \end{equation}
 Let $N$ be an open normal subgroup of $G$ such that the images of $z_1,z_2,\dots ,z_{k+1}$ are distinct elements in $G/N$. Then equations \eqref{e-open} show that for any $u\in N\cap C_G(\f )$ the Engel sink $ {\mathscr E}(gu)$ contains an element in each of the $k+1$ cosets $z_iN$. This means that the whole coset $g(N\cap C_G(\f ))$ is contained in $C_G(\f )\setminus E_k$. Thus every element of $C_G(\f )\setminus E_k$ has a neighbourhood contained in $C_G(\f )\setminus E_k$, which is therefore an open subset of $C_G(\f )$.

  Since $C_G(\f )=\bigcup_k E_k$, by the Baire Category Theorem~\ref{bct} there is $m\in \N$, an open (in the induced topology) normal subgroup $C_1$, and a coset $c_0C_1$ such that $|{\mathscr E}(c_0x)|\leq m$ for any $x\in C_1$. We now obtain that $|{\mathscr E}(x)|$ is $m$-bounded for any $x\in C_1$. 
  Indeed,  by \cite[Lemma~2.5]{khu-shu172}, in a metabelian group, if ${\mathscr E}(g)$ is finite, then ${\mathscr E}(g)$ is a normal subgroup. In  the quotient $\widebar M=M/\big({\mathscr E}(c_0) {\mathscr E}(c_0x)\big)$,  both $\widebar M'\langle \bar c_0\rangle$ and $\widebar M' \langle \bar c_0 \bar x\rangle$ are normal locally nilpotent subgroups. Hence their product, which contains $\bar x$, is also a locally  nilpotent subgroup by the Hirsch--Plotkin theorem \cite[12.1.2]{rob}. As a result, ${\mathscr E}(x)\leq {\mathscr E}(c_0) {\mathscr E}(c_0x)$ and therefore
\begin{equation}\label{e-lbound}
  |{\mathscr E}(x)|\leq |{\mathscr E}(c_0)| \cdot |{\mathscr E}(c_0x)|\leq m^2 \qquad \text{ for any }x\in C_1.
\end{equation}

To prove that $C_G(\f)F/F$ is finite, it remains to show that $C_1F/F$ is finite. For that we use the following lemma.

\begin{lemma}[{\cite[Lemma~2.4]{khu-shu204}}]\label{l-local}
  Suppose that $V$ is an abelian  finite group, and $U$ a group of coprime
automorphisms of $V$.  If $|[V,u]|\leq n$ for every $u\in U$, then $|[V,U]|$ is $n$-bounded, and therefore $|U|$ is also $n$-bounded.
\end{lemma}

  Let $q$ be any prime in  $\pi (C_1 F/ F)$, and let $Q$ be a Sylow $q$-subgroup of $ C_1 F$. Let $f(n)$ be the function furnished by Lemma~\ref{l-local} as a bound in terms of $n$ for $|U|$.  We claim that $|QF/F|\leq f(m^2)$; this will imply that $C_1F/F$ is finite.  Note that every element of $Q\setminus F$ acts non-trivially on the Hall $q'$-subgroup $F_{q'}$ of $F$. For every $u\in Q$, the left Engel sink  ${\mathscr E}(u)$ is a normal subgroup of order $\leq m^2$ by \eqref{e-lbound}. Since  $F_{q'}\langle u\rangle/{\mathscr E}(u)$ is pronilpotent and $u$ induces a coprime automorphism on $F_{q'}$, we have  $F_{q'}={\mathscr E}(u)  C_{F_{q'}}(u)$.
Hence $[F_{q'},u]=[{\mathscr E}(u)  C_{F_{q'}}(u),u]=[{\mathscr E}(u), u]={\mathscr E}(u)$, where the last equality holds by the minimality of ${\mathscr E}(u)$. Therefore in every  finite quotient $\widebar G$ of $G$ by a $\f$-invariant open normal subgroup we have $|\widebar Q/C_{\widebar Q}(\widebar F_{q'})|\leq f(m^2)$ by Lemma~\ref{l-local}. Hence $|QF/F|=|Q/C_{Q}(F_{q'})|\leq f(m^2)$, as claimed.

Since $C_1$ has finite index in $C_G(\f )$ and $C_1F/F$ is finite, we conclude that $C_G(\f )F/F$ is finite.
\end{proof}

Let $N$ be a $\f$-invariant open normal subgroup of $G$ containing $F$ such that $N\cap C_G(\f )\leq F$. Then Since $F(N)= F$, we have $C_N(\f)\leq F(N)$. Replacing $G$ with $N$ we can assume from the outset that $C_G(\f)\leq F$.

Every subset $R_k=\{x\in C_G(\f )\mid |{\mathscr R}(x)|\leq k\}$ is closed in the induced topology of  $C_G(\f )$. Indeed, this is equivalent to the complement of $R_k$ being an open subset of $C_G(\f )$. For every element $g\in C_G(\f )\setminus R_k$ we have $|{\mathscr R}(g)|\geq k+1$ and there are distinct elements  $z_1,z_2,\dots ,z_{k+1}\in {\mathscr R}(g)$. Using Lemma~\ref{l-min-r}
we can write for every $i=1,\dots ,k+1$
\begin{equation}\label{e-closed}
z_i=[g,{}_{n_i}x_i]=[g,{}_{n_i+m_i}x] \quad \text{for some } x_i\in G,\; n_i\geq 1,\;m_i\geq 1.
 \end{equation}
 Let $N$ be an open normal subgroup of $G$ such that the images of $z_1,z_2,\dots ,z_{k+1}$ are distinct elements in $G/N$. Then equations \eqref{e-closed} show that for any $u\in N\cap C_G(\f )$ the right Engel sink $ {\mathscr R}(gu)$ contains an element in each of the $k+1$ cosets $z_iN$. Thus, the coset $g(N\cap C_G(\f ))$ is contained in $C_G(\f )\setminus R_k$. This means that every element of $C_G(\f )\setminus R_k$ has a neighbourhood contained in $C_G(\f )\setminus R_k$, which is therefore an open subset of $C_G(\f )$.

Since $C_G(\f )=\bigcup _kR_k$, by the Baire Category Theorem~\ref{bct} there is $m\in \N$, an open (in the induced topology) normal subgroup $C_1$ of $C_G(\f )$, and a coset $c_0C_1$ such that
\begin{equation}\label{e-rbound}
  |{\mathscr R}(c_0x)|\leq m\quad \text{ for any } x\in C_1.
\end{equation}

By the standard commutator formula  $[xy,z]=[x,z]^y[y,z]$, using the fact that 
$F$ is abelian we have
$$
[ab,\underbrace{x,\dots ,x}_n]=[a,\underbrace{x,\dots ,x}_n]\cdot [b,\underbrace{x,\dots ,x}_n]
$$
for any $a,b\in F$, any  $x\in G$, and any $n\in \N$. Therefore  we obtain that ${\mathscr R}(ab)  \subseteq {\mathscr R}(a) {\mathscr R}(b)$ for any $a,b\in C_G(\f)\leq F$. The same commutator formula shows that  ${\mathscr R}(a^{-1})= \{z^{-1}\mid z\in {\mathscr R}(a)\}$, so that $|{\mathscr R}(a^{-1})|= | {\mathscr R}(a)|$.  From \eqref{e-rbound} we now obtain
\begin{equation}\label{e-rbound2}
  |{\mathscr R}(x)|\leq |{\mathscr R}(c_0^{-1})| \cdot |{\mathscr R}(c_0x)|\leq m^2 \qquad \text{ for any }x\in C_1.
\end{equation}

Let $N$ be a $\f$-invariant open normal subgroup of $G$ such that $N\cap C_G(\f)=C_1$. Then $ |{\mathscr R}(x)|\leq m^2$  for any $x\in C_N(\f)$.
By Theorem~\ref{t-bmjr}, every finite quotient of $N$ by a $\f$-invariant open normal subgroup has a nilpotent 
subgroup of index at most $f(m^2)$ for some function $f(n)$ depending on $n$ alone. Therefore $N$ has a $\f$-invariant  open pronilpotent 
subgroup $M$ of index at most $f(m^2)$, 
which  is  locally nilpotent
by Corollary~\ref{c-pron}. Clearly, $M$ is a sought-for open locally nilpotent subgroup of $G$.
\end{proof}

 \begin{remark}\label{r-r}  If, under the hypotheses of Theorem~\ref{t-r} there is a positive integer $n$ such that all fixed points of $\f$ have finite right Engel sinks of cardinality at most $n$, then the group $G$ has
  a locally nilpotent subgroup of finite  $n$-bounded index. This immediately follows from 
Theorem~\ref{t-bmjr} applied to finite quotients of $G$ by a $\f$-invariant open normal subgroup:  every such quotient has a nilpotent subgroup of index at most $f(n)$ for a function $f(n)$ depending only on $n$.  Therefore $G$ has a $\f$-invariant  open pronilpotent 
subgroup $M$ of index at most $f(n)$, 
which  is  locally nilpotent
by Corollary~\ref{c-pron}. 
\end{remark}

\section{Left Engel sinks}\label{s-l}

In this section we prove Theorem~\ref{t-l} concerning left Engel sinks of fixed points. We begin with the following lemma.

  \begin{lemma}\label{l-f2fin}

  Suppose that  $G=F_2(G)$ is a profinite group of pronilpotent length $2$ admitting a coprime automorphism $\f$ of prime order such that all elements of $C_G(\f)$ have finite left Engel sinks.

    {\rm (a)} Then $C_{G/F(G)}(\f)$ is finite.

     {\rm (b)}  If there is $m\in \N$ such that $|{\mathscr E}(c)|\leq m$ for all $c\in C_G(\f)$, then $|C_{G/F(G)}(\f)|$ is $m$-bounded.

      {\rm (c)}  If all elements of $C_G(\f)$ are left Engel elements, then $C_{G}(\f)\leq F(G)$.
  \end{lemma}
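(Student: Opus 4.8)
The plan is to treat all three parts from a single structural reduction together with one commutator lemma.

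\emph{Step 1 (reduction to $F(G)$ abelian).} Put $F=F(G)$. By \eqref{e-phi} the group $G/\Phi(F)$ again has pronilpotent length $2$ with Fitting subgroup $F/\Phi(F)$; the hypotheses on left Engel sinks of fixed points (and, for (b), the uniform bound, and, for (c), the left Engel property) pass to $G/\Phi(F)$, and $C_{(G/\Phi(F))/(F/\Phi(F))}(\f)=C_{G/F}(\f)$. Since $\Phi(F)=1$ makes every Sylow subgroup of the pronilpotent group $F$ elementary abelian, we may assume $F$ is abelian. Then $C_G(F)=F$ (otherwise $C_G(F)F/F\neq1$ is a nontrivial pronilpotent group, hence contains a nontrivial $q$-element which lifts to a $q$-element of $C_G(F)\setminus F$, contradicting Lemma~\ref{l-f2}(b)), so $C_{G/F}(\f)=C_G(\f)F/F$ acts faithfully on $F=\prod_rF_r$; moreover, for every prime $q$ the Sylow $q$-subgroup $\bar H_q$ of $C_{G/F}(\f)$ acts faithfully on the Hall $q'$-subgroup $F_{q'}$, again by Lemma~\ref{l-f2}(b).

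\emph{Step 2 (the key claim).} For every $u\in C_G(\f)$ and every abelian normal subgroup $A\trianglelefteq G$ on which $u$ acts coprimely (in particular for $A=F_{q'}$ and any $q$-element $u\in C_G(\f)$) one has $[A,u]\subseteq{\mathscr E}(u)$, hence $|[A,u]|\leq|{\mathscr E}(u)|$. Indeed, $A=[A,u]\times C_A(u)$ by coprime action, and on $V=[A,u]$ the map $\psi\colon v\mapsto[v,u]$ is an injective continuous endomorphism of the compact group $V$ with dense image, so it is a topological automorphism of $V$. For $v\in V$ there is $l_0$ with $\psi^{l}(v)\in{\mathscr E}(u)$ for all $l\geq l_0$; since ${\mathscr E}(u)$ is finite, $\{\psi^l(v):l\geq l_0\}$ is finite, so $\psi$ is periodic at $v$, its whole forward orbit equals $\{\psi^l(v):l\geq l_0\}\subseteq{\mathscr E}(u)$, and in particular $v\in{\mathscr E}(u)$. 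Thus $V\subseteq{\mathscr E}(u)$.

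\emph{Step 3 (the three parts).} For (c): if $c\in C_G(\f)\setminus F$, then $cF\neq1$ in the pronilpotent group $C_{G/F}(\f)$, so some Sylow component of $cF$ is the image of a $q$-element $u\in C_G(\f)\setminus F$; as $u$ is a left Engel element, ${\mathscr E}(u)=\{1\}$, so $[F_{q'},u]=1$ by Step 2, contradicting Lemma~\ref{l-f2}(b). For (b): Step 2 gives $|[F_{q'},u]|\leq m$ for every $q$-element $u\in C_G(\f)$; applying Lemma~\ref{l-local} to the faithful action of the image of a Sylow $q$-subgroup of $C_G(\f)$ on the image of $F_{q'}$ in every finite quotient of $G$ by a $\f$-invariant open normal subgroup, and passing to the inverse limit, yields $|\bar H_q|\leq f(m)$, where $f$ is the function of Lemma~\ref{l-local}. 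For (a): the sets $E_k=\{x\in C_G(\f):|{\mathscr E}(x)|\leq k\}$ are closed (their complements are open by the argument with Lemma~\ref{l-min} used in the proof of Lemma~\ref{l-cfin}), so by the Baire category theorem there are $m$, an open normal subgroup $C_1$ of $C_G(\f)$, and $c_0$ with $c_0C_1\subseteq E_m$; writing conjugation by $x\in C_1$ on $F_{q'}$ as $\mathrm{conj}_{c_0}^{-1}\circ\,\mathrm{conj}_{c_0x}$ and using that $F_{q'}$ is abelian gives $|[F_{q'},x]|\leq|[F_{q'},c_0]|\cdot|[F_{q'},c_0x]|$, which, combined with Step 2 after splitting elements into their primary (pro-$p$) components, should reduce part (a) to the bounded situation of (b) inside a suitable $\f$-invariant open normal subgroup of $G$.

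\emph{Main obstacles.} Two points carry the real weight. First, in both (b) and (a) one must still bound the set $\pi(C_{G/F}(\f))$ of primes occurring: knowing only that each Sylow subgroup of the pronilpotent group $C_{G/F}(\f)$ has ($m$-)bounded order, ruling out infinitely many primes is the crux. The intended argument: given distinct primes $q_1,q_2,\dots$ in $\pi(C_{G/F}(\f))$, pick $q_n$-elements $u_n\in C_G(\f)\setminus F$ whose images in $C_{G/F}(\f)$ pairwise commute, assemble them into one $u\in C_G(\f)$, and, using that each $u_n$ acts nontrivially on $F_{q_n'}$ while its commutator there is finite (so each $u_n$ ``uses up'' only boundedly many primes of $F$), deduce that $u$ acts coprimely on an infinite subproduct of $F$ with an infinite commutator subgroup there, contradicting finiteness of ${\mathscr E}(u)$ via Step 2. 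Second, in part (a) the passage from the single coset $c_0C_1$ (where Engel sinks are bounded) to a genuine finite-index subgroup of $C_G(\f)$ is delicate: unlike in the metabelian Lemma~\ref{l-cfin}, left Engel sinks of elements of $C_G(\f)$ need not be normal subgroups, so the submultiplicativity ${\mathscr E}(x)\subseteq{\mathscr E}(c_0){\mathscr E}(c_0x)$ is unavailable and has to be replaced by the commutator estimate on the relevant $q$-parts. I expect these two steps — the prime count and the coset-to-subgroup bridging — to be where essentially all of the difficulty lies.
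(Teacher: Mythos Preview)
Your reduction to $F$ abelian (Step~1) and the commutator claim (Step~2) are both correct, and your argument for part~(c) goes through cleanly. The two obstacles you flag for~(a) and~(b), however, are genuine gaps, and the sketches you offer for them do not close them. For the prime count, your plan is to lift nontrivial $q_n$-elements $\bar u_n\in C_{G/F}(\f)$ to $q_n$-elements $u_n\in C_G(\f)$ and ``assemble them into one $u\in C_G(\f)$''. But the $u_n$ need not commute in $C_G(\f)$ (only their images modulo $F$ do), so there is no obvious convergent product; and even working modulo $F$, knowing $|[F_r,u_n]|\le m$ for each generator $u_n$ does not give a uniform bound $|[F_r,w]|\le m$ for all $w$ in the group they generate, so Lemma~\ref{l-local} is not directly applicable. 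The pigeonhole heuristic (``each $u_n$ uses up only boundedly many primes of $F$'') does not by itself produce a single element with infinite coprime commutator. For the coset-to-subgroup passage in~(a), the factorisation $\mathrm{conj}_x=\mathrm{conj}_{c_0}^{-1}\circ\mathrm{conj}_{c_0x}$ gives $[F_{q'},x]\le [F_{q'},c_0][F_{q'},c_0x]$, but to bound the right-hand side via Step~2 you need $c_0$ and $c_0x$ to act \emph{coprimely} on $F_{q'}$, which fails unless they are $q$-elements; splitting them into primary components sends you back into $C_{G/F}(\f)$ where you no longer control the Engel sinks.

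The paper avoids both obstacles by a different, and shorter, route. After the same reduction to $F$ abelian, it observes that \emph{every} element of $H=FC_G(\f)$, not just the fixed points, has a finite left Engel sink in $H$: writing $g=uc$ with $u\in F$, $c\in C_G(\f)$, once a commutator $[h,{}_kg]$ lands in the abelian group $F$ (which happens since $H/F$ is locally nilpotent) one has $[h,{}_{k+n}g]=[[h,{}_kg],{}_nc]$, so ${\mathscr E}_H(g)\subseteq{\mathscr E}(c)$. This single line converts the hypothesis on $C_G(\f)$ into a hypothesis on all of $H$, whereupon the black-box results of \cite{khu-shu162} (finiteness, resp.\ $m$-boundedness, of $\gamma_\infty(H)$ when all left Engel sinks in $H$ are finite, resp.\ of size $\le m$) together with Lemma~\ref{l-gam} and $F(H)=F$ finish all three parts at once. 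In particular, no prime-counting and no Baire argument are needed inside this lemma.
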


  \begin{proof}
  We write $F=F(G)$ to lighten the notation.  Let $\Phi (F)$ be the Frattini subgroup of~$F$.  Since $F(G/\Phi (F))=F/\Phi (F)$ by \eqref{e-phi}, we can assume that $F$ is abelian in all parts of the lemma.
  
(a)     Since the group $FC_G(\f)/F$ is pronilpotent and all its elements have finite left Engel sinks, this group is locally nilpotent by \cite[Lemma~4.2]{khu-shu162}. We further claim that all elements of $FC_G(\f)$ have finite left Engel sinks in $FC_G(\f)$. Indeed, let $g=uc$ and $h=vd$, where  $u,v\in F$ and $c,d\in C_{G}(\f)$. For some $k$ the commutator $[h,\,{}_kg]$ belongs to $F$, since $\langle u,v\rangle F/F$ is nilpotent. Then $[h,\,{}_{k+n}g]=[[h,\,{}_kg],\,{}_nc]$ for any $n$, since $F$ is abelian. As  a result,  ${\mathscr E}(g)$ is contained in ${\mathscr E}(c)$, which is finite by hypothesis.

    Applying \cite[Theorem~1.2]{khu-shu162} we obtain that $\g_{\infty}(FC_G(\f))$ is finite. By Lemma~\ref{l-gam},  $C_G(\g_{\infty}(FC_G(\f)))$ is a closed normal pronilpotent subgroup, which has finite index in $FC_G(\f)$. It follows that $F(FC_G(\f))$ has finite index in $FC_G(\f)$, and the result follows, since $F(FC_G(\f))=F$ by Lemma~\ref{l-f2}.

     (b) If there is $m\in \N$ such that $|{\mathscr E}(c)|\leq m$ for all $c\in C_G(\f)$, then the above argument shows that $|{\mathscr E}(g)|\leq m$ for all $g\in FC_{G}(\f)$. By \cite[Theorem~3.1]{khu-shu162} then $|\g_{\infty}(\widebar FC_{\widebar G}(\f))|$ is $m$-bounded for every finite quotient $\widebar G$ of $G$ by a $\f$-invariant open normal subgroup. Hence $|\g_{\infty}( FC_{ G}(\f))|$ is also $m$-bounded, and so is the index of $F$ in $ FC_{ G}(\f)$ by the above argument.

     (c)  Since all elements of $C_G(\f)$ are left Engel elements, the above argument shows that  $C_{G}(\f)F$ is an Engel group, and therefore pronilpotent. Since  $F=F(FC_G(\f))$ by Lemma~\ref{l-f2}, we obtain $F =FC_G(\f)$.
  \end{proof}

  \begin{proof}[Proof of Theorem~\ref{t-l}]
 Recall that $G$ is a profinite group admitting a coprime automorphism $\f$ of prime order such that all elements of $C_G(\f)$ have finite left Engel sinks. We need to show that $G$ has an open pronilpotent-by-nilpotent subgroup.   First we perform reduction to the case of pronilpotent length $3$, that is, $G=F_3(G)$. Since all elements of $C_G(\f)$ have finite left Engel sinks, by \cite[Theorem~1.2]{khu-shu162} there is a finite normal subgroup $C_0$ of $ C_G(\f)$ such that $C_G(\f)/C_0$ is locally nilpotent. Let $N$ be a $\f$-invariant open normal subgroup of $G$ such that $N\cap C_0=1$. Then $C_N(\f)$ is locally nilpotent. Then the centralizer $C_{\widebar N}(\f)$ is nilpotent for every finite quotient $\widebar N$ of $N$ by a $\f$-invariant open normal subgroup.   Wang and Chen \cite{wan-che} used the classification of finite simple groups to prove that a finite group admitting a coprime automorphism of prime with nilpotent fixed-point subgroup is soluble. Furthermore, by a theorem of Turull \cite{tur} (the best-possible  improvement of the earlier result of Thompson~\cite{tho64}), the Fitting height of $\widebar N$ is at most $3$. Thus, every finite quotient of $N$ by an open normal subgroup has Fitting height at most $3$; hence $N$ has pronilpotent length at most $3$. Replacing $G$ with $N$,  we can assume from the outset that  $G=F_3(G)$.

  By Lemma~\ref{l-f2fin},  both $C_{G/F_2(G)}(\f)$ and $C_{F_2(G)/F(G)}(\f)$ are finite. Hence, $C_{G/F(G)}(\f)$ is finite. Let $H$ be a $\f$-invariant open normal subgroup containing $F(G)$ such that $H\cap C_G(\f)\leq F(G)$. Then $C_H(\f)\leq F(G)=F(H)$. By the theorems of Thompson~\cite{tho} and Higman~\cite{hig} the quotient $H/F(H)$ is nilpotent of class at most $h(p)$, where $p=|\f|$.

  Thus, $G$ has an open subgroup that is an extension of a pronilpotent group by a nilpotent group of class at most $h(p)$.
   \end{proof}

   \begin{remark}\label{r-l1}  If, under the hypotheses of Theorem~\ref{t-l} 
    all fixed points of $\f$ are left Engel elements, then the group $G$ is an extension of a pronilpotent group by a nilpotent group of class $h(p)$, where $h(p)$ is Higman's function. Indeed,  then $C_G(\f)\leq F(G)$ by Lemma~\ref{l-f2fin}(c). Then $G/F(G)$ is nilpotent of class at most $h(p)$, where $p=|\f|$, by the theorems of Thompson~\cite{tho} and Higman~\cite{hig}.
  \end{remark}

 \begin{remark}\label{r-l2}  If, under the hypotheses of Theorem~\ref{t-l} there is a positive integer $m$ such that all fixed points of $\f$ have finite left Engel sinks of cardinality at most $m$, then the group $G$ has
 \begin{itemize}
                                 \item[{\rm (a)}]  a meta-pronilpotent subgroup of finite  $m$-bounded index, and
                                 \item[{\rm (b)}] a subgroup of finite  $(p,m)$-bounded index that is an extension of a   pronilpotent group by a nilpotent group of class $g(p)$, where $p=|\f|$ and $g(p)$ is a function depending only on $p$.
                                       \end{itemize}
 Indeed, then by \cite[Theorem~1.3]{khu-shu204} for every finite quotient $\widebar G$ of $G$  by a $\f$-invariant open normal subgroup, the index of $F_2(\widebar G)$ in $\widebar G$ is $m$-bounded. Hence the index of $F_2(G)$ in $G$ is also $m$-bounded. Furthermore, by Lemma~\ref{l-f2fin}(b) the order $|C_{\bar G/F(\bar G)}(\f)|$ is $m$-bounded. By Khukhro's theorem \cite[Theorem~2]{khu90}, then $\bar G/F(\bar G)$ has a subgroup of $(p,m)$-bounded index that is nilpotent of $p$-bounded class $g(p)$. This implies that $\widebar G$ has an open subgroup of $(p,m)$-bounded index that is  nilpotent of class $g(p)$.
\end{remark}

\section{Examples}\label{s-e}

Here we present examples showing that in some respects Theorems \ref{t-r} and \ref{t-l} cannot be improved.

\begin{example}\label{ex-1}
Let $V$ be an elementary abelian group of order $7^2$, and $D_6=\langle a\rangle\rtimes \langle b\rangle $ a group of automorphisms of $V$ such that $a^3=1$, $b^2=1$, $C_V(a)=1$, and $|C_V(b)|=7$. Let $F=\prod_{i=1}^{\infty}V_i$ be the Cartesian product of isomorphic copies $V_i$ of $V$ as $\F_7D_6$-modules. Then $D_6$ can be regarded as a group of automorphisms of $F$. Let $G=F\langle a\rangle$ and let $\f$ be the automorphism of $G$ of order $2$ induced by $b$. Then $C_G(\f )=\prod_{i=1}^{\infty}C_{V_i}(\f )$. Using the fact that all the $V_i$ are isomorphic $\F_7\langle a \rangle$-modules, one can show that for any $c\in C_G(\f )$ the right Engel sink $\mathscr R(c)$ is finite and, moreover, the sizes of these sinks are uniformly bounded. At the same time, $\g_{\infty}(G)=F$ is infinite. This example shows that under the hypotheses of  Theorem~\ref{t-r} one cannot obtain a finite subgroup with a pronilpotent quotient.
\end{example}

\begin{example}\label{ex-2}
For the same $V$ and $D_6=\langle a\rangle\rtimes \langle b\rangle $ as in Example~\ref{ex-1}, let $F=\prod_{i=1}^{n}V_i$ be a finite direct product of $n$ copies $V_i$ of $V$ as  $\F_7D_6$-modules.  Let $G=F\langle a\rangle$ and let $\f$ be the automorphism of $G$ of order $2$ induced by $b$. Then $C_G(\f )=\prod_{i=1}^{n}C_{V_i}(\f )$. There is a constant $m$ independent of $n$ such that  $|\mathscr R(c)|\leq m$ for any $c\in C_G(\f )$.
In these examples, $\g_{\infty}(G)=F$, so $|\g_{\infty}(G)|$ cannot be bounded in terms of $m$ (and $|\f|$). This shows that the conclusion of Theorem~\ref{t-bmjr} (\cite[Theorem~1.4]{khu-shu204}) also cannot be improved in this respect.
\end{example}

\begin{example}\label{ex-3}
For the same $V$ and $D_6=\langle a\rangle\rtimes \langle b\rangle $ as in Example~\ref{ex-1}, let $H=\prod_{i=1}^{\infty}V_i\rtimes (\langle a_i\rangle \rtimes \langle b_i\rangle)$ be the Cartesian product of isomorphic copies $V_i\rtimes (\langle a_i\rangle \rtimes \langle b_i\rangle)$ of the semidirect product $V\rtimes (\langle a\rangle \rtimes \langle b\rangle)$ with $V_i,a_i,b_i$ naturally corresponding to $V,a,b$. Let $G=\prod_{i=1}^{\infty}V_i\rtimes \langle a_i\rangle $ and let $\f$ be the automorphism of $G$ of order $2$ induced by the `diagonal' $\prod_{i=1}^{\infty} b_i$. Then $F(G)=\prod_{i=1}^{\infty}V_i$ and $C_G(\f )=\prod_{i=1}^{\infty}C_{V_i}(\f )$. Since $F$ is abelian, all left Engel sinks of fixed points of $\f$ are trivial.   At the same time, $G/F(G)$ is infinite. This example shows that under the hypotheses of  Theorem~\ref{t-l} one cannot obtain an open pronilpotent subgroup (and the more so, a finite normal subgroup with a pronilpotent quotient).
\end{example}

\begin{example}\label{ex-4}
Similarly to Example~\ref{ex-2}, using  finite direct products $H=\prod_{i=1}^{n}V_i\rtimes (\langle a_i\rangle \rtimes \langle b_i\rangle)$ instead of the Cartesian product,  we obtain examples of finite groups $G$ with a coprime automorphism $\f$ of order $2$ such that all elements of $C_G(\f)$ have trivial left Engel sinks. These examples show that the conclusion of \cite[Theorem~1.3]{khu-shu204} giving a bound for the index of $F_2(G)$ cannot be improved to a bound for the index of $F(G)$.
\end{example}

\begin{example}\label{ex-5}
Let  $p$ be an odd prime and let $A=\langle a_1\rangle \times \langle a_2\rangle \simeq \Z_p\times \Z_p$ be a direct product of two copies of $p$-adic integers regarded as procyclic pro-$p$ groups with (topological) generators $a_1,a_2$. Let $B=\langle b\rangle \simeq \Z_p$ be another  procyclic pro-$p$ group with generator  $b$. We define an action of $b$ by an automorphism on  $\langle a_1\rangle$ by setting $a^b=a^{p+1}$. Then we define the action of $b$  by an automorphism on  $\langle a_2\rangle$ as the inverse of the automorphism $a_2\mapsto a_2^{p+1}$. The resulting semidirect product $G=A\rtimes B$ admits an automorphism $\f$ of order $2$ such that $b^\f=b^{-1}$ and $a_1^\f=a_2$. Then $C_G(\f)\leq A$ and therefore  all left Engel sinks of fixed points of $\f$ are trivial.   This example shows that a pronilpotent group with a coprime automorphism of prime order all of whose fixed points have trivial left Engel sinks does not have to have an open locally nilpotent subgroup, in contrast to Theorem~\ref{t-aks} concerning right Engel sinks.
\end{example}

 \section*{Acknowledgements}
 The second author was supported by FAPDF and CNPq-Brazil.

\ed